\numberwithin{theorem}{section}
\crefname{assumption}{Assumption}{Assumptions}
\crefname{remark}{Remark}{Remarks}
\title{Additive Schwarz Methods for Convex Optimization with Backtracking\thanks{Submitted to the editors DATE.
\funding{This research was supported by Basic Science Research Program through the National Research Foundation of Korea~(NRF) funded by the Ministry of Education~(2019R1A6A1A10073887).
A subset of these results appeared in \textit{Additive Schwarz methods for convex optimization---convergence theory and acceleration} in Proceedings of the 26th International Conference on Domain Decomposition Methods in Science and Engineering, Hong Kong, China, 2020~\cite{DD26}.}
}}
\author{Jongho Park\thanks{Natural Science Research Institute, KAIST, Daejeon 34141, Korea 
  (\email{jongho.park@kaist.ac.kr}, \url{https://sites.google.com/view/jonghopark}).}}
\newcommand\gap{\hspace{0.1cm}}
\newcommand{\un}{u^{(n)}}
\newcommand{\unn}{u^{(n+1)}}
\newcommand{\vn}{v^{(n)}}
\newcommand{\vnn}{v^{(n+1)}}
\newcommand{\taun}{\tau^{(n)}}
\newcommand{\taunn}{\tau^{(n+1)}}
\newcommand{\Rw}{R_k^* w_k}
\newcommand{\sumk}{\sum_{k=1}^{N}}
\DeclareMathOperator{\dom}{dom}
\DeclareMathOperator*{\argmin}{\arg\min}
\begin{document}

\maketitle

\begin{abstract}
This paper presents a novel backtracking strategy for additive Schwarz methods for general convex optimization problems as an acceleration scheme.
The proposed backtracking strategy is independent of local solvers, so that it can be applied to any algorithms that can be represented in an abstract framework of additive Schwarz methods.
Allowing for adaptive increasing and decreasing of the step size along the iterations,  the convergence rate of an algorithm is greatly improved.
Improved convergence rate of the algorithm is proven rigorously.
In addition, combining the proposed backtracking strategy with a momentum acceleration technique, we propose a further accelerated additive Schwarz method.
Numerical results for various convex optimization problems that support our theory are presented.
\end{abstract}

\begin{keywords}
additive Schwarz method, backtracking, acceleration, convergence rate, convex optimization
\end{keywords}

\begin{AMS}
65N55, 65B99, 65K15, 90C25
\end{AMS}

\section{Introduction}
\label{Sec:Introduction}
In this paper, we are interested in additive Schwarz methods for a general convex optimization problem 
\begin{equation}
\label{model}
\min_{u \in V} \left\{ E(u) := F(u) + G(u) \right\},
\end{equation}
where $V$ is a reflexive Banach space, $F \colon V \rightarrow \mathbb{R}$ is a Frech\'{e}t differentiable convex function, and $G \colon V \rightarrow \overline{\mathbb{R}}$ is a proper, convex, and lower semicontinuous function that is possibly nonsmooth.
We additionally assume that $E$ is coercive, so that~\cref{model} admits a solution $u^* \in V$.

The importance of studying Schwarz methods arises from both theoretical and computational viewpoints.
It is well-known that various iterative methods such as block relaxation methods, multigrid methods, and domain decomposition methods can be interpreted as Schwarz methods, also known as subspace correction methods.
Studying Schwarz methods can yield a unified understanding of these methods; there have been several notable works on the analysis of domain decomposition and multigrid methods for linear problems in the framework of Schwarz methods~\cite{LWXZ:2008,TW:2005,Xu:1992,XZ:2002}.
The convergence theory of Schwarz methods has been developed for several classes of nonlinear problems as well~\cite{Badea:2006,BK:2012,Park:2020,TX:2002}.
In the computational viewpoint, Schwarz methods are prominent numerical solvers for large-scale problems because they can efficiently utilize massively parallel computer architectures.
There has been plenty of research on Schwarz methods as parallel solvers for large-scale scientific problems of the form~\cref{model}, e.g., nonlinear elliptic problems~\cite{CHW:2020,TX:2002}, variational inequalities~\cite{BTW:2003,Tai:2003,THX:2002}, and mathematical imaging problems~\cite{CTWY:2015,LG:2019,Park:2021a}.

An important concern in the research of Schwarz methods is the acceleration of algorithms.
One of the most elementary relevant results is optimizing the relaxation parameters of Richardson iterations related to the Schwarz alternating method~\cite[section~C.3]{TW:2005}; observing that the Schwarz alternating method for linear elliptic problems can be viewed as a preconditioned Richardson method, one can optimize the relaxation parameters of Richardson iterations to achieve a faster convergence as in~\cite[Lemma~C.5]{TW:2005}.
Moreover, if one replaces Richardson iterations by conjugate gradient iterations with the same preconditioner, an improved algorithm with faster convergence rate can be obtained.
Such an idea of acceleration can be applied to not only linear problems but also nonlinear problems.
There have been some recent works on the acceleration of domain decomposition methods for several kinds of nonlinear problems: nonlinear elliptic problems~\cite{CHW:2020}, variational inequalities~\cite{LP:2021}, and mathematical imaging problems~\cite{LPP:2019,LP:2019b,LZCD:2021}.
In particular, in the author's previous work~\cite{Park:2021b}, an accelerated additive Schwarz method that can be applied to the general convex optimization~\cref{model} was considered.
Noticing that additive Schwarz methods for~\cref{model} can be interpreted as gradient methods~\cite{Park:2020}, acceleration schemes such as momentum~\cite{BT:2009,Nesterov:2013} and adaptive restarting~\cite{OC:2015} that were originally derived for gradient methods in the field of mathematical optimization were adopted.

In this paper, we consider another acceleration strategy called backtracking from the field of mathematical optimization for applications to additive Schwarz methods.
Backtracking was originally considered as a method of line search for step sizes that ensures the global convergence of a gradient method~\cite{Armijo:1966,BT:2009}.
In some recent works on accelerated gradient methods~\cite{CC:2019,Nesterov:2013,SGB:2014}, it was shown both theoretically and numerically that certain backtracking strategies can accelerate the convergence of gradient methods.
Allowing for adaptive increasing and decreasing of the step size along the iterations, backtracking can find a nearly-optimal value for the step size that results in large energy decay, so that fast convergence is achieved. 
Such an acceleration property of backtracking may be considered as a resemblance with the relaxation parameter optimization for Richardson iterations mentioned above.
Hence, as in the case of Richardson iterations for linear problems, one may expect that the convergence rate of additive Schwarz methods for~\cref{model} can be improved if an appropriate backtracking strategy is adopted.
Unfortunately, applying the existing backtracking strategies such as~\cite{CC:2019,Nesterov:2013,SGB:2014} to additive Schwarz methods is not so straightforward.
The existing backtracking strategies require the computation of the underlying distance function of the gradient method.
For usual gradient methods, the underlying distance function is simply the $\ell^2$-norm of the solution space so that such a requirement does not matter.
However, the underlying nonlinear distance function of additive Schwarz methods has a rather complex structure in general~(see~\cref{M}); this aspect makes direct applications of the existing strategies to additive Schwarz methods cumbersome.

This paper proposes a novel backtracking strategy for additive Schwarz methods, which does not rely on the computation of the underlying distance function.
As shown in \cref{Alg:back}, the proposed backtracking strategy does not depend on the computation of the distance function but the computation of the energy functional only.
Hence, the proposed backtracking strategy can be easily implemented for additive Schwarz methods for~\cref{model} with any choices of local solvers.
Acceleration properties of the proposed backtracking strategy can be analyzed mathematically; we present explicit estimates for the convergence rate of the method in terms of some averaged quantity estimated along the iterations.
The proposed backtracking strategy has another interesting feature; since it accelerates the additive Schwarz method in a completely different manner from the momentum acceleration introduced in~\cite{Park:2021b}, both of the momentum acceleration and the proposed backtracking strategy can be applied simultaneously to form a further accelerated method; see \cref{Alg:unified}.
We present numerical results for various convex optimization problems of the form~\cref{model} to verify our theoretical results and highlight the computational efficiency of the proposed accelerated methods.

This paper is organized as follows.
A brief summary of the abstract convergence theory of additive Schwarz methods for convex optimization presented in~\cite{Park:2020} is given in \cref{Sec:ASM}.
In \cref{Sec:Backtracking}, we present and analyze a novel backtracking strategy for additive Schwarz methods as an acceleration scheme.
A fast additive Schwarz method that combines the ideas of the momentum acceleration~\cite{Park:2021b} and the proposed backtracking strategy is proposed in \cref{Sec:Momentum}.
Numerical results for various convex optimization problems are presented in \cref{Sec:Numerical}.
We conclude the paper with remarks in \cref{Sec:Conclusion}.

\section{Additive Schwarz methods}
\label{Sec:ASM}
In this section, we briefly review the abstract framework for additive Schwarz methods for the convex optimization problem~\cref{model} presented in~\cite{Park:2020}.
In what follows, an index $k$ runs from $1$ to $N$.
Let $V_k$ be a reflexive Banach space and let $R_k^* \colon V_k \rightarrow V$ be a bounded linear operator such that
\begin{equation*}
V = \sumk R_k^* V_k
\end{equation*}
and its adjoint $R_k \colon V^* \rightarrow V_k^*$ is surjective.
For the sake of describing local problems, we define $d_k \colon V_k \times V \rightarrow \overline{\mathbb{R}}$ and $G_k \colon V_k \times V \rightarrow \overline{\mathbb{R}}$ as functionals defined on $V_k \times V$, which are proper, convex, and lower semicontinuous with respect to their first arguments.
Local problems have the following general form:
\begin{equation}
\label{local}
\min_{w_k \in V_k} \left\{ F(v) + \langle F'(v), \Rw \rangle
 + \omega d_k (w_k, v) + G_k ( w_k, v) \right\}, \\
\end{equation}
where $v \in V$ and $\omega > 0$.
If we set
\vspace{0cm}
\begin{subequations}
\label{exact_local}
\begin{equation}
d_k (w_k, v) = D_F (v + \Rw , v), \quad
G_k (w_k, v) = G(v + \Rw ), \quad \omega = 1
\end{equation}
in~\cref{local}, then the minimization problem is reduced to
\begin{equation}
\min_{w_k \in V_k} E(v + \Rw ),
\end{equation}
\end{subequations}
which is the case of exact local problems.
Here $D_F$ denotes the Bregman distance
\begin{equation*}
D_F (u,v) = F(u) - F(v) - \left< F'(v) , u-v \right>, \quad u,v \in V.
\end{equation*}
We note that other choices of $d_k$ and $G_k$, i.e., cases of inexact local problems, include various existing numerical methods such as block coordinate descent methods~\cite{BT:2013} and constraint decomposition methods~\cite{CTWY:2015,Tai:2003}; see~\cite[section~6.4]{Park:2020} for details.

The plain additive Schwarz method for~\cref{model} is presented in \cref{Alg:ASM}.
Constants $\tau_0$ and $\omega_0$ in \cref{Alg:ASM} will be given in \cref{Ass:convex,Ass:local}, respectively.
Note that $\dom G$ denotes the effective domain of $G$, i.e.,
\begin{equation*}
\dom G = \left\{ v \in V : G(v) < \infty \right\}.
\end{equation*}

\begin{algorithm}[]
\caption{Additive Schwarz method for~\cref{model}}
\begin{algorithmic}[]
\label{Alg:ASM}
\STATE Choose $u^{(0)} \in \dom G$, $\tau \in (0, \tau_0 ]$, and $\omega \geq \omega_0$.
\FOR{$n=0,1,2,\dots$}
\item \vspace{-0.5cm} \begin{equation*}
\resizebox{0.9\hsize}{!}{ $\displaystyle \begin{split}
w_k^{(n+1)} &\in \argmin_{w_k \in V_k} \left\{ F(\un) + \langle F'(\un), \Rw \rangle + \omega d_k (w_k, \un) + G_k (w_k, \un) \right\}, \gap 1 \leq k \leq N, \\
\unn &= \un + \tau \sumk \Rw^{(n+1)}
\end{split}$}
\end{equation*}
\vspace{-0.4cm}
\ENDFOR
\end{algorithmic}
\end{algorithm}

Note that $u^{(0)} \in \dom G$ implies $E(u^{(0)}) < \infty$.
In what follows, we fix $u^{(0)} \in \dom G$ and define a convex subset $K_0$ of $\dom G$ by
\begin{equation}
\label{K0}
K_0 = \left\{ u \in V : E(u) \leq E(u^{(0)}) \right\}.
\end{equation}
Since $K_0$ is bounded, there exists a constant $R_0 > 0$ such that
\begin{equation}
\label{R0}
K_0 \subseteq \left\{ u \in V : \| u - u^* \| \leq R_0 \right\}.
\end{equation}
In addition, we define
\begin{equation}
\label{Ktau}
K_{\tau} = \left\{ \frac{1}{\tau} u - \left( \frac{1}{\tau} - 1 \right) v : u, v \in K_0 \right\}
\end{equation}
for $\tau > 0$.

An important observation made in~\cite[Lemma~4.5]{Park:2020} is that \cref{Alg:ASM} can be interpreted as a kind of a gradient method equipped with a nonlinear distance function~\cite{Teboulle:2018}.
A rigorous statement is presented in the following.

\begin{lemma}[generalized additive Schwarz lemma]
\label{Lem:ASM}
For $v \in V$ and $\tau, \omega > 0$, we define
\begin{equation*}
\tilde{v} = v + \tau \sumk R_k^* \tilde{w}_k,
\end{equation*}
where
\begin{equation*}
\tilde{w}_k \in \argmin_{w_k \in V_k} \left\{ F(v) + \langle F'(v), \Rw \rangle + \omega d_k (w_k, v) + G_k (w_k, v) \right\}, \quad 1 \leq k \leq N.
\end{equation*}
Then we have
\begin{equation*}
\tilde{v} \in \argmin_{u \in V} \left\{ F(v) + \langle F'(v), u - v \rangle + M_{\tau, \omega} (u, v) \right\},
\end{equation*}
where the functional $M_{\tau, \omega} \colon V \times V \rightarrow \overline{\mathbb{R}}$ is given by
\begin{equation}
\label{M}
\begin{split}
M_{\tau, \omega} (u, v) &= \tau \inf \left\{ \sumk \left( \omega d_k + G_k \right) (w_k, v)  : u-v = \tau \sumk \Rw, \gap w_k \in V_k \right\} \\
 &\quad+ \left(1- \tau N \right) G( v), \quad u,v \in V.
\end{split}
\end{equation}
\end{lemma}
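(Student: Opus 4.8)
The plan is to show that $\tilde{v}$ attains the infimum of the global functional $\Phi(u) := F(v) + \langle F'(v), u - v \rangle + M_{\tau, \omega}(u, v)$ by collapsing the two nested minimizations---the outer one over $u$ and the inner infimum hidden inside the definition~\cref{M} of $M_{\tau, \omega}$---into a single \emph{separable} minimization over the local unknowns $w_1, \dots, w_N$.

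First I would substitute~\cref{M} into $\Phi(u)$ and note that, for a fixed $u$, any feasible tuple satisfies $u - v = \tau \sumk \Rw$, which forces $\langle F'(v), u - v \rangle = \tau \sumk \langle F'(v), \Rw \rangle$; hence this linear term can be absorbed into the infimum. Since $\tau > 0$ and $V = \sumk R_k^* V_k$, every $u \in V$ admits at least one feasible tuple, so minimizing $\Phi$ over $u$ and then over the constrained tuples is exactly the same as taking an unconstrained infimum over all $(w_k) \in V_1 \times \cdots \times V_N$. This yields
\begin{equation*}
\inf_{u \in V} \Phi(u) = F(v) + (1 - \tau N) G(v) + \tau \inf_{(w_k)} \sumk \left[ \langle F'(v), \Rw \rangle + \omega d_k(w_k, v) + G_k(w_k, v) \right].
\end{equation*}
The crucial point is that the bracketed expression is now separable across $k$, so the joint infimum decouples into a sum of $N$ local infima, each of which is precisely the objective of the local problem defining $\tilde{w}_k$ (the constant $F(v)$ being irrelevant to the \emph{argmin}). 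Thus the decoupled infimum is attained by the tuple $(\tilde{w}_k)$.

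Next I would verify that $\tilde{v}$ realizes this value. Because $\tilde{v} - v = \tau \sumk R_k^* \tilde{w}_k$, the tuple $(\tilde{w}_k)$ is feasible in the infimum defining $M_{\tau, \omega}(\tilde{v}, v)$, giving the upper bound $M_{\tau, \omega}(\tilde{v}, v) \leq \tau \sumk (\omega d_k + G_k)(\tilde{w}_k, v) + (1 - \tau N) G(v)$. Combining this with $\langle F'(v), \tilde{v} - v \rangle = \tau \sumk \langle F'(v), R_k^* \tilde{w}_k \rangle$ shows that $\Phi(\tilde{v})$ is at most the separable minimum computed above; since $\Phi(\tilde{v}) \geq \inf_{u} \Phi(u)$ trivially, the two must coincide, so $\tilde{v}$ is a global minimizer.

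I expect the only delicate step to be justifying the collapse of the nested minimizations: here I must allow the possibility that $M_{\tau, \omega}(u, v) = +\infty$ and confirm that the feasibility set in~\cref{M} is nonempty for every $u \in V$, which is guaranteed by $\tau > 0$ together with the decomposition $V = \sumk R_k^* V_k$. Once this interchange is established, the separability and the attainment of the minimum at $\tilde{v}$ are routine.
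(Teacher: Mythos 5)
Your proof is correct. The paper does not prove \cref{Lem:ASM} itself but imports it from \cite[Lemma~4.5]{Park:2020}, and your argument---absorbing the linear term $\langle F'(v), u-v\rangle$ into the constrained infimum, collapsing the outer minimization over $u$ and the inner infimum over feasible tuples into a single infimum over $(w_1,\dots,w_N)$ via the surjection $(w_k) \mapsto v + \tau \sumk \Rw$, and then decoupling by separability so that the minimum is attained at $(\tilde{w}_k)$---is exactly the standard argument used there, with the one delicate point (nonemptiness of the feasible set and the possibility $M_{\tau,\omega}(u,v)=+\infty$) correctly identified and handled.
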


A fruitful consequence of \cref{Lem:ASM} is an abstract convergence theory of additive Schwarz methods for convex optimization~\cite{Park:2020} that directly generalizes the classical theory for linear problems~\cite[Chapter~2]{TW:2005}.
The following three conditions are considered in the convergence theory: stable decomposition, strengthened convexity, and local stability~(cf.~\cite[Assumptions~2.2 to~2.4]{TW:2005}).

\begin{assumption}[stable decomposition]
\label{Ass:stable}
There exists a constant $q > 1$ such that for any bounded and convex subset $K$ of $V$, the following holds:
for any $u, v \in K \cap \dom G$, there exists $w_k \in V_k$, $1\leq k \leq N$, with $u-v = \sumk \Rw$, such that
\begin{equation*}
\sumk d_k (w_k, v) \leq \frac{C_{0,K}^q}{q} \| u-v \|^q , \quad
\sumk G_k ( w_k, v) \leq G \left( u \right) + (N-1) G(v),
\end{equation*}
where $C_{0, K}$ is a positive constant depending on $K$.
\end{assumption}

\begin{assumption}[strengthened convexity]
\label{Ass:convex}
There exists a constant $\tau_0 \in (0, 1]$ which satisfies the following:
for any $v \in V$, $w_k \in V_k$, $1 \leq k \leq N$, and $\tau \in (0, \tau_0 ]$, we have
\begin{equation*}
\left( 1 - \tau N \right) E(v) + \tau \sumk E (v + \Rw) \geq E \left( v + \tau \sumk \Rw \right).
\end{equation*}
\end{assumption}

\begin{assumption}[local stability]
\label{Ass:local}
There exists a constant $\omega_0 > 0$ which satisfies the following:
for any $v \in \dom G$, and $w_k \in V_k$, $1 \leq k \leq N$, we have
\begin{equation*}
D_F ( v + \Rw, v ) \leq \omega_0 d_k (w_k, v), \quad
G(v + \Rw ) \leq G_k (w_k , v).
\end{equation*}
\end{assumption}

\Cref{Ass:stable} is compatible with various stable decomposition conditions presented in existing works, e.g.,~\cite{Badea:2010,TX:2002,TW:2005}.
\Cref{Ass:convex} trivially holds with $\tau_0 = 1/N$ due to the convexity of $E$.
However, a better value for $\tau_0$ independent of $N$ can be found by the usual coloring technique; see~\cite[section~5.1]{Park:2020} for details.
In the same spirit as~\cite{TW:2005}, \cref{Ass:local} gives a one-sided measure of approximation properties of the local solvers.
It was shown in~\cite[section~4.1]{Park:2020} that the above assumptions reduce to~\cite[Assumptions~2.2 to~2.4]{TW:2005}  if they are applied to linear elliptic problems.
Under the above three assumptions, we have the following convergence theorem for Algorithm~\ref{Alg:ASM}~\cite[Theorem~4.7]{Park:2020}.

\begin{proposition}
\label{Prop:conv}
Suppose that \cref{Ass:stable,Ass:convex,Ass:local} hold.
In \cref{Alg:ASM}, we have
\begin{equation*}
E(u^{(n)}) - E(u^*) = O \left( \frac{\kappa_{\tau, \omega}}{n^{q-1}} \right), 
\end{equation*}
where $\kappa_{\tau, \omega}$ is the additive Schwarz condition number defined by
\begin{equation}
\label{kASM}
\kappa_{\tau, \omega} = \frac{\omega C_{0, K_{\tau}}^q }{\tau^{q-1}},
\end{equation}
and $K_{\tau}$ was defined in~\cref{Ktau}.
\end{proposition}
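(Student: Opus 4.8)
The plan is to use \cref{Lem:ASM} to treat \cref{Alg:ASM} as a gradient method with the nonlinear distance $M_{\tau,\omega}$, run a majorization--minimization descent argument, and then optimize a relaxation parameter to turn the descent estimate into the claimed sublinear rate. Throughout, write $d_n = E(\un) - E(u^*) \geq 0$ and let
\[
\Q_{\tau,\omega}(u,v) = F(v) + \langle F'(v), u - v \rangle + M_{\tau,\omega}(u,v),
\]
so that $\unn$ minimizes $\Q_{\tau,\omega}(\cdot, \un)$ by \cref{Lem:ASM}.

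First I would prove the majorization $E(\unn) \leq \Q_{\tau,\omega}(u, \un)$ for every $u \in V$. Starting from $\unn = \un + \tau \sumk R_k^* w_k^{(n+1)}$, I apply \cref{Ass:convex} to bound $E(\unn)$ by the local contributions $\tau \sumk E(\un + R_k^* w_k^{(n+1)})$ plus $(1-\tau N)E(\un)$, expand each local energy through the Bregman distance, and invoke \cref{Ass:local} (using $\omega \geq \omega_0$) to replace $D_F$ and $G$ by $d_k$ and $G_k$. The resulting bound is exactly $\Q_{\tau,\omega}(\unn, \un)$ because the local minimizers $w_k^{(n+1)}$ also attain the infimum defining $M_{\tau,\omega}(\unn,\un)$ in \cref{M}; since $\unn$ minimizes $\Q_{\tau,\omega}(\cdot,\un)$, the majorization follows for all $u$. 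Choosing $u = \un$ gives $E(\unn) \leq E(\un)$, so all iterates lie in $K_0$ and therefore satisfy $\|\un - u^*\| \leq R_0$ by \cref{R0}.

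The crux is to feed a well-chosen test point into the majorization. For $t \in [0, \tau]$ I would take $u = (1-t)\un + t u^*$ and, to realize the constrained decomposition $u - \un = \tau \sumk R_k^* w_k$ without rescaling the (non-homogeneous) arguments of $d_k$ and $G_k$, introduce the extrapolated point $y_t = \un + \frac{t}{\tau}(u^* - \un)$. This point obeys $\tau (y_t - \un) = u - \un$ and, crucially, lies in $K_\tau$ defined in \cref{Ktau}. Applying \cref{Ass:stable} to $y_t, \un \in K_\tau \cap \dom G$ yields a decomposition of $y_t - \un$ whose cost is $\tfrac{C_{0,K_\tau}^q}{q}\|y_t - \un\|^q = \tfrac{C_{0,K_\tau}^q}{q}(t/\tau)^q\|u^*-\un\|^q$. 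Substituting into $M_{\tau,\omega}$, using convexity of $F$ for the linear term and convexity of $G$ to bound $\tau G(y_t)$ --- which is legitimate precisely because $t \leq \tau$ makes $y_t$ a convex combination of $\un$ and $u^*$ --- the $G$-terms telescope into $(1-t)G(\un) + t G(u^*)$, and with $\|u^* - \un\| \leq R_0$ I obtain
\[
d_{n+1} \leq (1-t)\, d_n + \frac{\kappa_{\tau,\omega} R_0^q}{q}\, t^q, \qquad t \in [0,\tau].
\]

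Finally I would minimize the right-hand side over $t$. The unconstrained optimizer $t^* = (d_n / (\kappa_{\tau,\omega} R_0^q))^{1/(q-1)}$ satisfies $t^* \leq \tau$ once $n$ is large, since $d_n$ is nonincreasing and tends to $0$; substituting it gives $d_{n+1} \leq d_n - c\, d_n^{q/(q-1)}$ for an explicit constant $c$. A standard estimate for recursions $a_{n+1} \leq a_n - c\, a_n^p$ with $p = q/(q-1) > 1$ then yields $d_n = O(n^{-1/(p-1)})$, and since $1/(p-1) = q-1$ and the constant unwinds to a multiple of $\kappa_{\tau,\omega}$, this is exactly the claim. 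I expect the main obstacle to be the third step: \cref{Ass:stable} must be invoked at the \emph{extrapolated} point $y_t$ to avoid dividing the arguments of $d_k, G_k$ by $\tau$ (which their lack of homogeneity forbids), yet the convexity bound on $G(y_t)$ only holds when $y_t$ is a genuine convex combination, which forces $t \leq \tau$; reconciling this restriction with the optimal step $t^*$, and verifying $y_t \in K_\tau \cap \dom G$ throughout, is the delicate point, whereas the descent lemma and the final recursion are routine.
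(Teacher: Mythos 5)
Your proposal is correct and follows essentially the same route as the paper's own argument (the proof of \cref{Thm:conv} specialized to a constant step size, as noted in \cref{Rem:recur}, which itself reproduces the argument behind the cited result): the generalized additive Schwarz lemma, the majorization built from \cref{Ass:convex,Ass:local}, the extrapolated test point fed into \cref{Ass:stable} on $K_\tau$, the constrained minimization over $t\in[0,\tau]$, and the recursion lemma all coincide. The only soft spot is your appeal to ``$d_n$ tends to $0$'' to guarantee $t^*\le\tau$ eventually, which is mildly circular as stated; but when $t^*>\tau$ the constrained minimum at $t=\tau$ already yields a geometric decrease of $d_n$, so the threshold $d_n\le\omega C_{0,K_\tau}^q R_0^q$ is reached after finitely many steps and the asymptotic claim is unaffected.
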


Meanwhile, the {\L}ojasiewicz inequality holds in many applications~\cite{BDL:2007,XY:2013}; it says that the energy functional $E$ of~\cref{model} is sharp around the minimizer $u^*$. 
We summarize this property in \cref{Ass:sharp}; it is well-known that improved convergence results for first-order optimization methods can be obtained under this assumption~\cite{BST:2014,RD:2020}.

\begin{assumption}[sharpness]
\label{Ass:sharp}
There exists a constant $p > 1$ such that for any bounded and convex subset $K$ of $V$ satisfying $u^* \in K$, we have
\begin{equation*}
\frac{\mu_K}{p} \| u - u^* \|^{p} \leq E(u) - E(u^*), \quad u \in K,
\end{equation*}
for some $\mu_K > 0$.
\end{assumption}

We present an improved convergence result for \cref{Alg:ASM} compared to \cref{Prop:conv} under the additional sharpness assumption on $E$~\cite[Theorem~4.8]{Park:2020}.

\begin{proposition}
\label{Prop:conv_sharp}
Suppose that \cref{Ass:stable,Ass:convex,Ass:local,Ass:sharp} hold.
In \cref{Alg:ASM}, we have
\begin{equation*}
E(\un) - E(u^*) =  \begin{cases} O \left( \left(1 - \left( 1- \frac{1}{q} \right) \min \left\{ \tau, \left( \frac{\mu}{q \kappa_{\tau, \omega}} \right)^{\frac{1}{q-1}} \right\} \right)^n \right), & \textrm{ if } p = q, \\ 
O \left( \frac{\left( \kappa_{\tau, \omega}^p / \mu^q \right)^{\frac{1}{p-q}}}{n^{\frac{p(q-1)}{p-q}}} \right), & \textrm{ if } p > q, \end{cases}
\end{equation*}
where $\kappa_{\tau, \omega}$ was defined in~\cref{kASM}.
\end{proposition}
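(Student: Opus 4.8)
The plan is to turn the fixed-step machinery behind \cref{Prop:conv} into a self-improving recursion by feeding the sharpness bound back into the one-step estimate. The engine is the inequality that underlies \cref{Prop:conv}: combining \cref{Lem:ASM} with \cref{Ass:stable,Ass:convex,Ass:local} and applying the stable decomposition to the interpolant $(1-s)\un + s u^*$, $s \in [0,1]$, one obtains, after writing $t = \tau s$ and abbreviating $r_n := E(\un) - E(u^*)$,
\begin{equation*}
r_{n+1} \leq (1-t) r_n + \frac{\kappa_{\tau,\omega}}{q} t^q \| \un - u^* \|^q, \quad t \in (0, \tau].
\end{equation*}
The derivation uses only convexity of $E$ (to bound $E((1-s)\un + su^*) - E(u^*) \leq (1-s) r_n$), the stable decomposition (to bound the nonlinear distance $M_{\tau,\omega}$ by $\tfrac{1}{q}C_{0,K_\tau}^q \|\cdot\|^q$ plus the $G$-terms), and local stability (for the sufficient-decrease property $E(\unn) \leq \min_u \{F(\un) + \langle F'(\un), u - \un\rangle + M_{\tau,\omega}(u,\un)\}$). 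Letting $t \to 0$ already gives $r_{n+1} \leq r_n$, so the iterates are monotone and remain in $K_0$; since $u^* \in K_0$ and $K_0$ is bounded and convex, \cref{Ass:sharp} applies on $K_0$ with a single constant $\mu := \mu_{K_0}$, yielding $\| \un - u^* \|^q \leq (p r_n / \mu)^{q/p}$ for all $n$.

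For $p = q$ the sharpness bound reads $\|\un - u^*\|^q \leq q r_n / \mu$, and substitution gives $r_{n+1} \leq g(t)\, r_n$ with $g(t) = 1 - t + (\kappa_{\tau,\omega}/\mu)\, t^q$. I would minimize the convex function $g$ over $(0,\tau]$: its unconstrained minimizer is $(\mu/(q\kappa_{\tau,\omega}))^{1/(q-1)}$, so the constrained minimizer is $\bar t = \min\{\tau, (\mu/(q\kappa_{\tau,\omega}))^{1/(q-1)}\}$, and using $q(\kappa_{\tau,\omega}/\mu)\bar t^{q-1} \leq 1$ one checks $g(\bar t) \leq 1 - (1 - 1/q)\bar t$. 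Iterating the contraction $r_{n+1} \leq g(\bar t)\, r_n$ gives exactly the claimed geometric rate.

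For $p > q$, substituting $\|\un - u^*\|^q \leq (p r_n/\mu)^{q/p}$ yields $r_{n+1} \leq (1-t) r_n + B\, t^q r_n^{q/p}$ with $B = (\kappa_{\tau,\omega}/q)(p/\mu)^{q/p}$. Minimizing the right-hand side over $t$, the unconstrained optimizer is $t^\ast = ((qB)^{-1} r_n^{(p-q)/p})^{1/(q-1)}$, and inserting it collapses the estimate to
\begin{equation*}
r_{n+1} \leq r_n - c\, r_n^{\beta}, \quad \beta = \frac{q(p-1)}{p(q-1)} > 1, \quad c = \left(1 - \frac{1}{q}\right)(qB)^{-\frac{1}{q-1}}.
\end{equation*}
Since \cref{Prop:conv} already guarantees $r_n \to 0$ and $\beta > 1$ forces $t^\ast \to 0$, the optimizer is feasible ($t^\ast \leq \tau$) for all large $n$, so this recursion holds eventually and the finitely many early steps only shift the constant. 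The endgame is the elementary scalar lemma that $r_{n+1} \leq r_n - c\, r_n^\beta$ with $\beta > 1$ forces $r_n = O((cn)^{-1/(\beta-1)})$; a direct computation gives $1/(\beta-1) = p(q-1)/(p-q)$ and $c^{-1/(\beta-1)} \asymp (\kappa_{\tau,\omega}^p/\mu^q)^{1/(p-q)}$, which is precisely the asserted polynomial rate and prefactor.

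The main obstacle is the $p > q$ case. Two points require care: justifying the constrained optimization, i.e.\ that the unconstrained $t^\ast$ becomes feasible and that the (finitely many) iterations where one must instead take $t = \tau$ do not spoil the asymptotic rate; and bookkeeping the constants through the scalar-recurrence lemma so that the exponent $p(q-1)/(p-q)$ and the prefactor $(\kappa_{\tau,\omega}^p/\mu^q)^{1/(p-q)}$ come out exactly rather than up to an unspecified power of $p$, $q$. By contrast, deriving the one-step estimate (already carried out for \cref{Prop:conv}) and the $p = q$ minimization are routine.
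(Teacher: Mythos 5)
Your argument is correct and is essentially the paper's own route: the paper does not prove \cref{Prop:conv_sharp} directly but cites~\cite[Theorem~4.8]{Park:2020}, and the generalized version it does prove (\cref{Thm:conv_sharp}, together with \cref{Rem:recur} specializing $\taun \equiv \tau$) uses exactly your chain --- the one-step bound from \cref{Lem:ASM}, \cref{Lem:M_lower,Lem:M_upper} with the interpolant $t u^* + (1-t)\un$, sharpness to close the recursion, and the scalar lemmas (\cref{Lem:min,Lem:recur}) for the two cases. The only cosmetic difference is that the paper enforces feasibility of the optimal $t$ in the $p>q$ case via a smallness condition on $\zeta_0$ rather than your ``eventually feasible'' asymptotic argument, which is immaterial for the stated $O(\cdot)$ bound.
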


\Cref{Prop:conv,Prop:conv_sharp} are direct consequences of \cref{Lem:ASM} in the sense that they can be easily deduced by invoking theories of gradient methods for convex optimization~\cite[section~2]{Park:2020}.

\section{Backtracking strategies}
\label{Sec:Backtracking}
In gradient methods, backtracking strategies are usually adopted to find a suitable step size that ensures sufficient decrease of the energy.
For problems of the form~\cref{model}, backtracking strategies are necessary in particular to obtain the global convergence to a solution when the Lipschitz constant of $F'$ is not known~\cite{Armijo:1966,BT:2009}.
Considering \cref{Alg:ASM}, a sufficient decrease condition of the energy is satisfied whenever $\tau \in (0, \tau_0]$ and $\omega \geq \omega_0$~(see~\cite[Lemma~4.6]{Park:2020}), and the values of $\tau_0$ and $\omega_0$ in \cref{Ass:convex,Ass:local}, respectively, can be obtained explicitly in many cases.
Indeed, an estimate for $\tau_0$ independent of $N$ can be obtained by the coloring technique~\cite[section~5.1]{Park:2020}, and we have $\omega_0 = 1$ when we use the exact local solvers.
Therefore, backtracking strategies are not essential for the purpose of ensuring the global convergence of additive Schwarz methods.
In this perspective, to the best of our knowledge, there have been no considerations on applying backtracking strategies in the existing works on additive Schwarz methods for convex optimization.

Meanwhile, in several recent works on accelerated first-order methods for convex optimization~\cite{CC:2019,Nesterov:2013,SGB:2014}, full backtracking strategies that allow for adaptive increasing and decreasing of the estimated step size along the iterations were considered.
While classical one-sided backtracking strategies~(see, e.g.,~\cite{BT:2009}) are known to suffer from degradation of the convergence rate if an inaccurate estimate for the step size is computed, full backtracking strategies can be regarded as acceleration schemes in the sense that a gradient method equipped with full backtracking outperforms the method with the known Lipschitz constant~\cite{CC:2019,SGB:2014}.

In this section, we deal with a backtracking strategy for additive Schwarz methods as an acceleration scheme.
Existing full backtracking strategies~\cite{CC:2019,Nesterov:2013,SGB:2014} mentioned above cannot be applied directly to additive Schwarz methods because the evaluation of the nonlinear distance function $M_{\tau, \omega}(\cdot, \cdot)$ is not straightforward due to its complicated definition~(see \cref{Lem:ASM}).
Instead, we propose a novel backtracking strategy for additive Schwarz methods, in which the computational cost of the backtracking procedure is insignificant compared to that of solving local problems.
The abstract additive Schwarz method equipped with the proposed backtracking strategy is summarized in \cref{Alg:back}.

\begin{algorithm}[]
\caption{Additive Schwarz method for~\cref{model} with backtracking}
\begin{algorithmic}[]
\label{Alg:back}
\STATE Choose $u^{(0)} \in \dom G$, $\tau^{(0)}  = \tau_0$, $\omega \geq \omega_0$, and $\rho \in (0, 1)$.
\FOR{$n=0,1,2,\dots$}
\STATE \vspace{-0.5cm} \begin{equation*}
\resizebox{0.9\hsize}{!}{ $\displaystyle
w_k^{(n+1)} \in \argmin_{w_k \in V_k} \left\{ F(\un) + \langle F'(\un), \Rw \rangle + \omega d_k (w_k, \un) + G_k (w_k, \un) \right\}, \gap 1 \leq k \leq N
$}
\end{equation*}
\vspace{-1cm}
\STATE \begin{equation*}
\tau \leftarrow \taun / \rho
\end{equation*}
\vspace{-0.4cm}
\REPEAT
\STATE \vspace{-1cm} \item \begin{equation*}
\unn = \un + \tau \sumk \Rw^{(n+1)}
\end{equation*}
\vspace{-0.2cm}
\IF{$\displaystyle E(\unn) > (1- \tau N) E(\un) + \tau \sumk E (\un + R_k^* w_k^{(n+1)} )$}
\STATE \vspace{-0.3cm} \begin{equation*}
\tau \leftarrow \rho \tau
\end{equation*}
\ENDIF
\UNTIL{$\displaystyle E(\unn) \leq (1- \tau N) E(\un) + \tau \sumk E(\un + R_k^* w_k^{(n+1)} )$}
\STATE \vspace{-0.3cm} \begin{equation*}
\taunn= \tau
\end{equation*}
\ENDFOR
\end{algorithmic}
\end{algorithm}

The parameter $\rho \in (0,1)$ in \cref{Alg:back} plays a role of an adjustment parameter for the grid search.
As $\rho$ closer to $0$, the grid for line search of $\tau$ becomes sparser.
On the contrary, the greater $\rho$, the greater $\tau^{(n+1)}$ is found with the more computational cost for the backtracking process.
The condition $\tau^{(0)} = \tau_0$ is not critical in the implementation of \cref{Alg:back} since $\tau_0$ can be obtained by the coloring technique.

Different from the existing approaches~\cite{CC:2019,Nesterov:2013,SGB:2014}, the backtracking scheme in \cref{Alg:back} does not depend on the distance function $M_{\tau, \omega}(\cdot, \cdot)$ but the energy functional $E$ only.
Hence, the stop criterion
\begin{equation}
\label{back_stop}
E(\unn) \leq  (1 - \tau N) E(\un) + \tau \sumk E(\un + \Rw^{(n+1)})
\end{equation}
for the backtracking process can be evaluated without considering to solve the infimum in the definition~\cref{M} of $M_{\tau, \omega}(\cdot, \cdot)$.
Moreover, the backtracking process is independent of local problems~\cref{local}.
That is, the stop criterion~\cref{back_stop} is universal for any choices of $d_k$ and $G_k$.

The additional computational cost of \cref{Alg:back} compared to \cref{Alg:ASM} comes from the backtracking process.
When we evaluate the stop criterion~\cref{back_stop}, the values of $E(\unn)$, $E(\un)$, and $E(\un + \Rw^{(n+1)})$ are needed.
Among them, $E(\un)$ and $E(\un + \Rw^{(n+1)})$ can be computed prior to the backtracking process since they require $\un$ and $\Rw^{(n+1)}$ only in their computations.
Hence, the computational cost of an additional inner iteration of the backtracking process consists of the computation of $E(\unn)$ only, which is clearly marginal.
In conclusion, the most time-consuming part of each iteration of \cref{Alg:back} is to solve local problems on $V_k$, i.e., to obtain $w_k^{(n+1)}$, and the other part has relatively small computational cost.
This highlights the computational efficiency of the backtracking process in \cref{Alg:back}.

Next, we analyze the convergence behavior of \cref{Alg:back}.
First, we prove that the backtracking process in \cref{Alg:back} ends in finite steps and that the step size $\taun$ never becomes smaller than a particular value.

\begin{lemma}
\label{Lem:back}
Suppose that \cref{Ass:convex} holds.
The backtracking process in \cref{Alg:back} terminates in finite steps and we have
\begin{equation*}
\taun \geq \tau_0
\end{equation*}
for $n \geq 0$, where $\tau_0$ was given in \cref{Ass:convex}.
\end{lemma}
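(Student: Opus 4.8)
The plan is to base everything on the simple observation that the stop criterion~\eqref{back_stop} of the backtracking loop is nothing but the strengthened convexity inequality of \cref{Ass:convex} rewritten. Applying \cref{Ass:convex} with $v = \un$ and $w_k = w_k^{(n+1)}$ and moving the term $E(v + \tau\sumk \Rw)$ to the left-hand side shows that~\eqref{back_stop} holds for every $\tau \in (0, \tau_0]$. Consequently, the criterion can fail only when $\tau > \tau_0$, and it is automatically satisfied once $\tau$ is small enough. This single fact drives both conclusions.

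Granting this, the termination claim is the easy half. Each time the criterion fails inside the \textbf{repeat} loop the trial step size is multiplied by $\rho \in (0,1)$, so the successive trials decrease geometrically toward $0$. After finitely many inner iterations $\tau$ therefore enters $(0, \tau_0]$, where the criterion is guaranteed to hold by the observation above, and the loop exits.

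The lower bound $\taun \geq \tau_0$ is where the real work lies, and the key structural fact I would isolate first is that every step size generated by the algorithm lies on the geometric grid $\{ \tau_0 \rho^m : m \in \mathbb{Z} \}$. This holds because $\tau^{(0)} = \tau_0$ and each update either divides by $\rho$ (the initial assignment $\tau \leftarrow \taun/\rho$) or multiplies by $\rho$ (the update inside the loop), both of which preserve the grid. I would then prove $\taun \geq \tau_0$ by induction on $n$. The base case holds since $\tau^{(0)} = \tau_0$. For the inductive step I assume $\taun \geq \tau_0$ and suppose, toward a contradiction, that $\taunn < \tau_0$. The accepted value $\taunn$ cannot be the first trial, since that first trial equals $\taun/\rho > \taun \geq \tau_0$; hence the criterion failed at the immediately preceding trial $\taunn/\rho$, which by the contrapositive of the first observation forces $\taunn/\rho > \tau_0$. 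Combining the two strict inequalities gives $\rho\tau_0 < \taunn < \tau_0$.

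The main obstacle is precisely this last band: arguing from the stop criterion alone yields only $\taunn > \rho\tau_0$, which is strictly weaker than what is claimed. The resolution is to invoke the grid structure—writing $\taunn = \tau_0 \rho^m$ for the corresponding integer $m$, the band $\rho\tau_0 < \taunn < \tau_0$ translates into $0 < m < 1$, which contains no integer. This contradiction forces $\taunn \geq \tau_0$ and closes the induction. Thus the discreteness of the grid $\{\tau_0 \rho^m\}$ is exactly the feature that upgrades the naive bound $\taunn > \rho\tau_0$ to the sharp bound $\taunn \geq \tau_0$, and keeping track of it throughout the induction is what makes the argument go through.
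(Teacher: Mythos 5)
Your proof is correct and follows essentially the same route as the paper's: both rest on the observation that \cref{Ass:convex} forces the stop criterion~\cref{back_stop} to hold whenever $\tau \le \tau_0$, and both derive the lower bound from the fact that every trial step lies on the geometric grid $\{\tau_0\rho^m\}$, so a value strictly between $\rho\tau_0$ and $\tau_0$ is impossible. Your version merely makes explicit (via the induction and the grid-membership claim) what the paper's proof leaves implicit when it writes ``say $\taun = \rho^j\tau_0$ for some $j\ge 1$.''
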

\begin{proof}
Since \cref{Ass:convex} implies that the stop criterion~\cref{back_stop} is satisfied whenever $\tau \in (0, \tau_0]$, the backtracking process ends if $\tau$ becomes smaller than or equal to $\tau_0$.
Now, take any $n \geq 1$.
If $\taun$ were less than $\tau_0$, say $\taun = \rho^j \tau_0$ for some $j \geq 1$, then $\tau$ in the previous inner iteration is $\rho^{j-1} \tau_0 \leq \tau_0$, so that the backtracking process should have stopped there, which is a contradiction.
Therefore, we have $\tau^{(n)} \geq \tau_0$.
\end{proof}

\Cref{Lem:back} says that \cref{Ass:convex} is a sufficient condition to ensure that $\taunn$ is successfully determined by the backtracking process in each iteration of \cref{Alg:back}.
It is important to notice that $\taun$ is always greater than or equal to $\tau_0$; the step sizes of \cref{Alg:back} are larger than or equal to that of \cref{Alg:ASM}.
Meanwhile, similar to the plain additive Schwarz method, \cref{Alg:back} generates the sequence $\{ \un \}$ whose energy is monotonically decreasing.
Hence, $\{ \un \}$ is contained in $K_0$ defined in~\cref{K0}.

\begin{lemma}
\label{Lem:monotone}
Suppose that \cref{Ass:convex} holds.
In \cref{Alg:back}, the sequence $\{ E(\un)\}$ is decreasing.
\end{lemma}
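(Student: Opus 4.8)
The plan is to combine the finite termination guaranteed by \cref{Lem:back} with the defining stop criterion of the backtracking loop and a single ``local energy does not increase'' estimate. Since \cref{Ass:convex} holds, \cref{Lem:back} ensures that the inner loop terminates, so that at the end of the $n$-th outer iteration the stop criterion~\cref{back_stop} holds for $\tau = \taunn$; that is,
\begin{equation*}
E(\unn) \leq (1 - \taunn N) E(\un) + \taunn \sumk E(\un + \Rw^{(n+1)}).
\end{equation*}
Because $\taunn > 0$, proving $E(\unn) \leq E(\un)$ reduces, after rearranging this inequality, to the single aggregate estimate
\begin{equation*}
\sumk E(\un + \Rw^{(n+1)}) \leq N E(\un).
\end{equation*}

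Next I would establish this aggregate estimate termwise, by showing $E(\un + \Rw^{(n+1)}) \leq E(\un)$ for each $k$. The key is that $w_k^{(n+1)}$ minimizes the local objective $J_k(w_k) = F(\un) + \langle F'(\un), \Rw \rangle + \omega d_k(w_k, \un) + G_k(w_k, \un)$ over $V_k$, so testing against the null correction $w_k = 0$ gives $J_k(w_k^{(n+1)}) \leq J_k(0)$. For exact local solvers this already closes the termwise bound, since $J_k$ then coincides with $w_k \mapsto E(\un + \Rw)$ and $J_k(0) = E(\un)$. For general (inexact) $d_k$ and $G_k$ I would pass from the surrogate $J_k$ to the true energy using \cref{Ass:local}: writing $F(\un + \Rw^{(n+1)}) = F(\un) + \langle F'(\un), \Rw^{(n+1)} \rangle + D_F(\un + \Rw^{(n+1)}, \un)$ via the Bregman identity, the local stability bounds $D_F(\un + \Rw^{(n+1)}, \un) \leq \omega_0 d_k(w_k^{(n+1)}, \un)$ and $G(\un + \Rw^{(n+1)}) \leq G_k(w_k^{(n+1)}, \un)$, together with $\omega \geq \omega_0$ and $d_k \geq 0$, yield $E(\un + \Rw^{(n+1)}) \leq J_k(w_k^{(n+1)}) \leq J_k(0) = E(\un)$, the last equality reflecting that the null correction leaves both the iterate and the local data unchanged. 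Summing over $k$ gives the aggregate estimate, hence $E(\unn) \leq E(\un)$, and since $n$ is arbitrary the sequence $\{ E(\un) \}$ is decreasing.

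I expect the main obstacle to be precisely this termwise step for inexact local solvers, namely certifying that minimizing the surrogate functional $J_k$ actually forces a decrease of the genuine energy $E$. The difficulty is that $J_k$ and $w_k \mapsto E(\un + \Rw)$ differ in general, so the elementary inequality $J_k(w_k^{(n+1)}) \leq J_k(0)$ does not by itself control $E$; one must invoke the approximation property of the local solvers encoded in \cref{Ass:local} and the value of the local data at the origin to bridge the gap. Once this local decrease is secured, the remainder is the purely algebraic rearrangement of the stop criterion described in the first paragraph.
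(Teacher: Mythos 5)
Your proof is correct and takes essentially the same route as the paper: the paper's one-line argument combines the stop criterion \cref{back_stop} with ``the minimization property of $w_k^{(n+1)}$,'' which is exactly the termwise chain $E(\un + \Rw^{(n+1)}) \leq J_k(w_k^{(n+1)}) \leq J_k(0) = E(\un)$ that you spell out. Your version is simply more explicit about the ingredients (local stability and the normalization of $d_k$, $G_k$ at $w_k = 0$) that the paper leaves implicit in that phrase.
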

\begin{proof}
Take any $n \geq 0$.
By the stop criterion~\cref{back_stop} for backtracking and the minimization property of $w_k^{(n+1)}$, we get
\begin{equation*}
E(\unn) \leq ( 1 - \taunn N) E(\un) + \taunn \sumk E( \un + \Rw^{(n+1)}) \leq E(\un),
\end{equation*}
which completes the proof.
\end{proof}

Note that~\cite[Lemma~4.6]{Park:2020} played a key role in the convergence analysis of \cref{Alg:ASM} presented in~\cite{Park:2020}.
Relevant results for \cref{Alg:back} can be obtained in a similar manner.

\begin{lemma}
\label{Lem:M_lower}
Suppose that \cref{Ass:convex,Ass:local} hold.
In \cref{Alg:back}, we have
\begin{equation*}
\begin{split}
&D_F(\unn, \un) + G(\unn) \leq M_{\taunn, \omega} (\unn, \un)
\end{split}
\end{equation*}
for $n \geq 0$, where the functional $M_{\tau, \omega}(\cdot, \cdot)$ and the set $K_{\tau}$ were defined in~\cref{M,Ktau}, respectively, for $\tau, \omega > 0$.
\end{lemma}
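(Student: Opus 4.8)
The plan is to first pin down the \emph{exact} value of the nonlinear distance $M_{\taunn, \omega}(\unn, \un)$, and only then combine it with \cref{Ass:local} and the stopping criterion of the backtracking loop. The subtle point is that $M_{\taunn, \omega}(\unn, \un)$ is defined in~\cref{M} through an infimum over all decompositions $\{ w_k \}$ of $\unn - \un$, so simply plugging in the computed correction $\{ w_k^{(n+1)} \}$ yields only an \emph{upper} bound for $M_{\taunn, \omega}(\unn, \un)$, whereas the claim requires a \emph{lower} bound. To overcome this I would show that the infimum in~\cref{M} is attained at $\{ w_k^{(n+1)} \}$. Indeed, among all decompositions with $\unn - \un = \taunn \sumk \Rw$, the quantity $\langle F'(\un), \sumk \Rw \rangle = \langle F'(\un), (\unn - \un)/\taunn \rangle$ is constant, so minimizing $\sumk (\omega d_k + G_k)(w_k, \un)$ over such decompositions is equivalent to minimizing $\sumk \{ \langle F'(\un), \Rw \rangle + (\omega d_k + G_k)(w_k, \un) \}$. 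Since $w_k^{(n+1)}$ solves the separable local problems, it minimizes this last expression over all decompositions, hence attains the infimum (this is also implicit in the minimization characterization of \cref{Lem:ASM}). Consequently,
\[
M_{\taunn, \omega}(\unn, \un) = \taunn \sumk (\omega d_k + G_k)(w_k^{(n+1)}, \un) + (1 - \taunn N) G(\un).
\]

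With this identity in hand I would invoke \cref{Ass:local}. Because $\omega \geq \omega_0$ and $d_k \geq 0$ (the latter since $\omega_0 d_k \geq D_F \geq 0$), local stability gives $(\omega d_k + G_k)(w_k^{(n+1)}, \un) \geq D_F(\un + R_k^* w_k^{(n+1)}, \un) + G(\un + R_k^* w_k^{(n+1)})$ for each $k$. Summing and substituting into the identity above produces the lower bound
\[
M_{\taunn, \omega}(\unn, \un) \geq \taunn \sumk \left[ D_F(\un + R_k^* w_k^{(n+1)}, \un) + G(\un + R_k^* w_k^{(n+1)}) \right] + (1 - \taunn N) G(\un) =: X.
\]

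It then remains to prove $D_F(\unn, \un) + G(\unn) \leq X$. Expanding the Bregman distances and writing $E = F + G$, a direct computation in which the linear terms $\langle F'(\un), \unn - \un \rangle = \taunn \langle F'(\un), \sumk R_k^* w_k^{(n+1)} \rangle$ cancel shows that this inequality is equivalent to $E(\unn) \leq (1 - \taunn N) E(\un) + \taunn \sumk E(\un + R_k^* w_k^{(n+1)})$, which is exactly the stopping criterion~\cref{back_stop} satisfied by $\taunn$. Chaining $D_F(\unn, \un) + G(\unn) \leq X \leq M_{\taunn, \omega}(\unn, \un)$ then finishes the proof. I expect the first step to be the main obstacle: unlike the plain method of \cref{Alg:ASM}, where $\tau \leq \tau_0$ lets one apply \cref{Ass:convex} to \emph{every} decomposition and thereby bound the infimum term by term, here $\taunn$ may exceed $\tau_0$, so \cref{Ass:convex} is unavailable for arbitrary decompositions and the required lower bound on $M_{\taunn, \omega}(\unn, \un)$ genuinely hinges on identifying $\{ w_k^{(n+1)} \}$ as the minimizer in~\cref{M}.
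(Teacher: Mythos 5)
Your proof is correct, and on the one delicate point it is more careful than the paper's own argument. The paper runs essentially the same chain of inequalities but in the opposite order: it fixes an \emph{arbitrary} decomposition $w_k$ with $\unn - \un = \taunn \sumk \Rw$, applies \cref{Ass:local} to bound $\taunn \sumk (\omega d_k + G_k)(w_k,\un) + (1-\taunn N)G(\un)$ from below by $(1-\taunn N)E(\un) + \taunn \sumk E(\un + \Rw) - F(\un) - \langle F'(\un), \unn - \un\rangle$, then invokes the stop criterion~\cref{back_stop} to bound this further by $D_F(\unn,\un) + G(\unn)$, and only at the end takes the infimum over all such decompositions. As you observe, \cref{back_stop} is only guaranteed for the computed corrections $\{w_k^{(n+1)}\}$, and the intermediate inequality $(1-\taunn N)E(\un) + \taunn \sumk E(\un+\Rw) \geq E(\unn)$ can genuinely fail for other decompositions once $\taunn$ exceeds $1/N$, while \cref{Ass:convex} is unavailable because $\taunn$ may exceed $\tau_0$. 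Your extra step --- that the constraint fixes $\langle F'(\un), \sumk \Rw\rangle$, so the separable minimality of the $w_k^{(n+1)}$ forces the infimum in~\cref{M} to be attained at $\{w_k^{(n+1)}\}$ --- is exactly what is needed to reduce the verification to that single decomposition, where \cref{back_stop} does apply; it is the same mechanism that underlies \cref{Lem:ASM}, and it closes a gap in the paper's write-up rather than merely reproducing it. The remaining algebra (using $\omega \geq \omega_0$ and $d_k \geq 0$ to apply local stability, the cancellation of the linear terms, and the equivalence of the resulting inequality with~\cref{back_stop}) coincides with the paper's computation.
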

\begin{proof}
Take any $w_k \in V_k$ such that
\begin{equation}
\label{sufficient_decomposition}
\unn - \un = \taunn \sumk \Rw.
\end{equation}
By \cref{Ass:local} and~\cref{back_stop}, we get
\begin{equation*}
\resizebox{\hsize}{!}{ $\displaystyle
\begin{split}
&\taunn \sumk (\omega d_k + G_k ) (w_k, \un) + (1 - \taunn ) G (\un) \\
&\geq ( 1 - \taunn N) E(\un) + \taunn \sumk E( \un + \Rw ) - F(\un) - \langle F' (\un), \unn - \un \rangle \\
&\geq D_F (\unn, \un) + G(\unn).
\end{split}
$}
\end{equation*}
Taking the infimum over all $w_k$ satisfying~\cref{sufficient_decomposition} yields the desired result.
\end{proof}

\begin{lemma}
\label{Lem:M_upper}
Suppose that \cref{Ass:stable} holds.
Let $\tau, \omega > 0$.
For any bounded and convex subset $K$ of $V$, we have
\begin{equation}
\label{M_upper}
M_{\tau, \omega} (u, v) \leq \frac{\omega C_{0,K_{\tau}'}^q}{q \tau^{q-1}}  \| u - v \|^q + \tau G \left( \frac{1}{\tau} u - \left( \frac{1}{\tau} - 1 \right) v \right) + (1 - \tau ) G(v)
\end{equation}
for $u,v \in K \cap \dom G$, where the functional $M_{\tau, \omega}(\cdot, \cdot)$ was given in~\cref{M} and
\begin{equation*}
K_{\tau}' = \left\{ \frac{1}{\tau} u - \left( \frac{1}{\tau} - 1 \right) v : u,v \in K \right\}.
\end{equation*}
In addition, the right-hand side of~\cref{M_upper} is decreasing with respect to $\tau$.
More precisely, if $\tau_1 \geq \tau_2 > 0$, then we have
\begin{multline}
\label{M_upper2}
\frac{\omega C_{0,K_{\tau_1}'}^q}{q \tau_1^{q-1}}  \| u - v \|^q + \tau_1 G \left( \frac{1}{\tau_1} u - \left( \frac{1}{\tau_1} - 1 \right) v \right) + (1 - \tau_1 ) G(v) \\
\leq \frac{\omega C_{0,K_{\tau_2}'}^q}{q \tau_2^{q-1}}  \| u - v \|^q + \tau_2 G \left( \frac{1}{\tau_1} u - \left( \frac{1}{\tau_2} - 1 \right) v \right) + (1 - \tau_2 ) G(v)
\end{multline}
for $u,v \in K \cap \dom G$.
\end{lemma}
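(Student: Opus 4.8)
The plan is to prove the two claims separately: first the pointwise upper bound \cref{M_upper}, which amounts to exhibiting a single admissible decomposition inside the infimum defining $M_{\tau,\omega}$, and then the monotonicity, which I would establish by treating the distance term and the two $G$-terms of the right-hand side of \cref{M_upper} separately.

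For the upper bound, set $\hat{u} = \frac{1}{\tau} u - ( \frac{1}{\tau} - 1 ) v$, so that $\hat{u} - v = \frac{1}{\tau}(u-v)$ and hence $u - v = \tau(\hat{u} - v)$. If $\hat{u} \notin \dom G$ the right-hand side of \cref{M_upper} equals $+\infty$ and there is nothing to prove, so I assume $\hat{u} \in \dom G$. I would first note that $K_\tau'$ is bounded and convex, being the image of the bounded convex set $K \times K$ under an affine map, and that both $\hat{u}, v \in K_\tau'$; the latter holds for $v$ because taking the two arguments equal in the definition of $K_\tau'$ recovers every point of $K$. Applying \cref{Ass:stable} to the pair $\hat{u}, v \in K_\tau' \cap \dom G$ produces $w_k \in V_k$ with $\hat{u} - v = \sumk \Rw$, whence $u - v = \tau \sumk \Rw$, so $\{w_k\}$ is admissible in \cref{M}. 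Substituting this decomposition gives
\[
M_{\tau,\omega}(u,v) \leq \tau \omega \sumk d_k(w_k, v) + \tau \sumk G_k(w_k, v) + (1 - \tau N) G(v).
\]
Then $\sumk d_k(w_k,v) \leq \frac{C_{0,K_\tau'}^q}{q}\|\hat{u} - v\|^q = \frac{C_{0,K_\tau'}^q}{q\tau^q}\|u-v\|^q$ yields the first term of \cref{M_upper} after the factor $\tau\omega$, while $\sumk G_k(w_k,v) \leq G(\hat{u}) + (N-1)G(v)$ together with the leftover $(1-\tau N)G(v)$ collapses, via $\tau(N-1) + (1 - \tau N) = 1 - \tau$, into $\tau G(\hat{u}) + (1-\tau)G(v)$, which is the remaining part of \cref{M_upper}.

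For the monotonicity, fix $u, v \in K \cap \dom G$, write $\hat{u}_\tau = \frac{1}{\tau}u - (\frac{1}{\tau}-1)v = v + \frac{1}{\tau}(u-v)$, and show the right-hand side of \cref{M_upper} is nonincreasing in $\tau$. For the distance term I would verify that $\tau \mapsto C_{0,K_\tau'}^q / \tau^{q-1}$ decreases: when $\tau_1 \geq \tau_2$, so that $\frac{\tau_2}{\tau_1} \in (0,1]$, every point $v + \frac{1}{\tau_1}(u-v)$ of $K_{\tau_1}'$ equals $v + \frac{1}{\tau_2}(u'-v)$ with $u' = v + \frac{\tau_2}{\tau_1}(u-v)$ a convex combination of $u$ and $v$, hence $u' \in K$ and $K_{\tau_1}' \subseteq K_{\tau_2}'$. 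Reading $C_{0,K}$ as the smallest constant for which \cref{Ass:stable} holds, any decomposition valid on the larger set is valid on the smaller one, so $C_{0,K_{\tau_1}'} \leq C_{0,K_{\tau_2}'}$, and combined with $\tau_1^{-(q-1)} \leq \tau_2^{-(q-1)}$ (as $q>1$) the distance term decreases. For the $G$-terms I would use convexity of $G$: since $\tau_1 \geq \tau_2$, the point $\hat{u}_{\tau_1}$ lies between $v$ and $\hat{u}_{\tau_2}$, namely $\hat{u}_{\tau_1} = (1-\frac{\tau_2}{\tau_1})v + \frac{\tau_2}{\tau_1}\hat{u}_{\tau_2}$, so $G(\hat{u}_{\tau_1}) \leq (1-\frac{\tau_2}{\tau_1})G(v) + \frac{\tau_2}{\tau_1}G(\hat{u}_{\tau_2})$; multiplying by $\tau_1$ and adding $(1-\tau_1)G(v)$ gives $\tau_1 G(\hat{u}_{\tau_1}) + (1-\tau_1)G(v) \leq \tau_2 G(\hat{u}_{\tau_2}) + (1-\tau_2)G(v)$. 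Adding the distance and $G$ comparisons yields \cref{M_upper2}.

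The main obstacle I anticipate is the distance-term step: it depends on interpreting $C_{0,K}$ as the optimal (smallest) admissible constant and on the inclusion $K_{\tau_1}' \subseteq K_{\tau_2}'$, whose proof rests squarely on the convexity of $K$. By contrast, the $G$-terms follow from one application of the convexity of $G$, and the upper bound itself is a direct substitution once the admissible decomposition has been identified through \cref{Ass:stable}.
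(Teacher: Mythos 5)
Your proof is correct and takes essentially the same route as the paper: the monotonicity part (the inclusion $K_{\tau_1}' \subseteq K_{\tau_2}'$ forcing $C_{0,K_{\tau_1}'} \leq C_{0,K_{\tau_2}'}$, combined with a single application of the convexity of $G$) is exactly the paper's argument. For the bound \cref{M_upper} the paper merely cites \cite[Lemma~4.6]{Park:2020}, and the decomposition argument you supply---applying \cref{Ass:stable} to the pair $\hat{u} = \frac{1}{\tau}u - \left(\frac{1}{\tau}-1\right)v$ and $v$ in $K_{\tau}'$ and substituting into the infimum in \cref{M}---is precisely the standard proof of that cited result.
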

\begin{proof}
\Cref{M_upper} is identical to the second half of~\cite[Lemma~4.6]{Park:2020}.
Nevertheless, it is revisited to highlight that some assumptions given in~\cite[Lemma~4.6]{Park:2020} are not necessary for \cref{Lem:M_upper}; for example, $\tau$ need not be less than or equal to $\tau_0$ as stated in~\cite[Lemma~4.6]{Park:2020} but can be any positive real number.

Now, we prove~\cref{M_upper2}.
Since $\tau_1 \geq \tau_2$, one can deduce from~\cref{Ktau} that $K_{\tau_1}' \subseteq K_{\tau_2}'$.
Hence, by the definition of $C_{0, K}$ given in \cref{Ass:stable}, we get $C_{0, K_{\tau_1}'} \leq C_{0, K_{\tau_2}'}$.
Meanwhile, the convexity of $G$ implies that
\begin{equation*}
\resizebox{\hsize}{!}{ $\displaystyle
\tau_1 G \left( \frac{1}{\tau_1} u - \left( \frac{1}{ \tau_1} -1 \right) v \right) + ( 1 - \tau_1) G(v)
\leq \tau_2 G \left( \frac{1}{\tau_2} u - \left( \frac{1}{ \tau_2 } -1 \right) v \right) + ( 1 - \tau_2) G(v),
$}
\end{equation*}
which completes the proof.
\end{proof}

Recall that the sequence $\{ \tau^{(n)} \}$ generated by \cref{Alg:back} has a uniform lower bound $\tau_0$ by \cref{Lem:back}.
Hence, for any $n \geq 0$, we get
\begin{equation}
\label{starting_point}
\begin{split}
E (\unn) &= F( \un) + \langle F'(\un), \unn - \un \rangle + D_F (\unn, \un)  + G(\unn) \\
&\stackrel{\textrm{(i)}}{\leq} F( \un) + \langle F'(\un), \unn - \un \rangle + M_{\taunn, \omega} (\unn, \un) \\
&\stackrel{\textrm{(ii)}}{=} \min_{u \in K_0} \left\{ F( \un) + \langle F'(\un), u - \un \rangle + M_{\taunn, \omega} (u, \un) \right\} \\
&\stackrel{\textrm{(iii)}}{\leq} \min_{u \in K_0} \Bigg\{ F( \un) + \langle F'(\un), u - \un \rangle
+ \frac{\omega C_{0, K_{\tau_0}}^q}{q \tau_0^{q-1}} \| u - \un \|^q \\
&\quad\quad\quad\quad + \tau_0 G \left( \frac{1}{\tau_0} u - \left( \frac{1}{ \tau_0} -1 \right) \un \right) + ( 1 - \tau_0 ) G (\un) \Bigg\},
\end{split}
\end{equation}
where (i), (ii), and (iii) are because of \cref{Lem:M_lower,Lem:ASM,Lem:M_upper}, respectively.
Starting from~\cref{starting_point}, we readily obtain the following convergence theorems for \cref{Alg:back} by proceeding in the same manner as in~\cite[Appendices~A.3 and~A.4]{Park:2020}.

\begin{proposition}
\label{Prop:naive}
Suppose that \cref{Ass:stable,Ass:convex,Ass:local} hold.
In \cref{Alg:back}, we have
\begin{equation*}
E(u^{(n)}) - E(u^*) = O \left( \frac{\kappa_{\tau_0, \omega}}{n^{q-1}} \right), 
\end{equation*}
where $\kappa_{\tau_0, \omega}$ was defined in~\cref{kASM}.
\end{proposition}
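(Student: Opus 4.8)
The plan is to reduce the analysis to the fixed-step convergence argument behind \cref{Prop:conv}, taking as starting point the master inequality already established in~\cref{starting_point}. The crucial observation is that, although \cref{Alg:back} uses variable step sizes, \cref{Lem:back} guarantees $\taunn \geq \tau_0$ for every $n$, while the monotonicity statement~\cref{M_upper2} of \cref{Lem:M_upper} ensures that the upper bound for $M_{\tau, \omega}$ is nonincreasing in $\tau$. Consequently the variable step $\taunn$ may be replaced throughout by the fixed lower bound $\tau_0$, which is exactly what step~(iii) of~\cref{starting_point} accomplishes. Thus the backtracking never produces a worse one-step estimate than the plain method run with $\tau = \tau_0$, and the whole convergence analysis collapses onto the fixed-step case.

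Starting from~\cref{starting_point}, I would first substitute the affine reparametrization $u = (1 - \tau_0) \un + \tau_0 z$ with $z \in K_0$, which is admissible since $K_0$ is convex and contains $\un$, so that $u \in K_0$ as well. Under this substitution the scaled argument $\frac{1}{\tau_0} u - ( \frac{1}{\tau_0} - 1 ) \un$ collapses to $z$, the power term $\| u - \un \|^q$ becomes $\tau_0^q \| z - \un \|^q$, and the linear term picks up a factor $\tau_0$. Applying the convexity of $F$ to bound $\langle F'(\un), z - \un \rangle \leq F(z) - F(\un)$ and folding the two $G$-terms together, the right-hand side simplifies to
\begin{equation*}
E(\unn) \leq (1 - \tau_0) E(\un) + \tau_0 E(z) + \frac{\omega C_{0, K_{\tau_0}}^q \tau_0}{q} \| z - \un \|^q, \quad z \in K_0.
\end{equation*}

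Next I would subtract $E(u^*)$, take the test point $z = (1 - \theta) \un + \theta u^*$ on the segment joining $\un$ to $u^*$ for a free parameter $\theta \in [0, 1]$, and invoke the convexity of $E$ together with the diameter bound $\| \un - u^* \| \leq R_0$ from~\cref{R0}. Writing $a_n = E(\un) - E(u^*)$, this yields the scalar recursion
\begin{equation*}
a_{n+1} \leq (1 - \tau_0 \theta) a_n + \frac{\kappa_{\tau_0, \omega} \tau_0^q R_0^q}{q} \theta^q, \quad \theta \in [0, 1],
\end{equation*}
where the definition~\cref{kASM} of $\kappa_{\tau_0, \omega}$ has been used. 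Minimizing the right-hand side over $\theta$ (choosing $\theta^{q-1}$ proportional to $a_n / (\kappa_{\tau_0, \omega} R_0^q)$ whenever this lies in $[0, 1]$, and $\theta = 1$ otherwise) casts this into exactly the form treated in the sublinear-rate argument of~\cite[Appendix~A.3]{Park:2020}, from which the asserted bound $E(\un) - E(u^*) = O(\kappa_{\tau_0, \omega} / n^{q-1})$ follows.

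I expect the main conceptual obstacle to be not the final recursion---which is routine once assembled---but the verification that the adaptivity of the backtracking does no harm. This is already encapsulated in~\cref{starting_point}, whose single most important ingredient is the monotonicity~\cref{M_upper2}: without it, the variable step $\taunn$ could not be bounded below by $\tau_0$ inside the $M_{\tau, \omega}$ upper estimate, and the rate constant would instead depend on the a priori unknown values of the $\taunn$. The remaining technical care lies in checking that the reparametrized point $u = (1 - \tau_0) \un + \tau_0 z$ stays in $K_0 \cap \dom G$, so that the constant $C_{0, K_{\tau_0}}$ coming from \cref{Ass:stable} is the one that legitimately appears.
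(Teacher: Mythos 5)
Your proposal is correct and follows essentially the same route as the paper: the paper's proof of \cref{Prop:naive} consists precisely of taking the chain~\cref{starting_point} (whose step~(iii) already encodes the combination of \cref{Lem:back} with the monotonicity~\cref{M_upper2} to freeze the step at $\tau_0$) and then running the fixed-step recursion of~\cite[Appendix~A.3]{Park:2020}, which is exactly the reparametrization, convexity, and $\theta$-minimization argument you spell out. Your write-up is in fact more explicit than the paper's one-line deferral, and matches the variable-step computation carried out later in the proof of \cref{Thm:conv} with $\taunn$ replaced by $\tau_0$.
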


\begin{proposition}
\label{Prop:naive_sharp}
Suppose that \cref{Ass:stable,Ass:convex,Ass:local,Ass:sharp} hold.
In \cref{Alg:back}, we have
\begin{equation*}
E(\un) - E(u^*) =  \begin{cases} O \left( \left(1 - \left( 1- \frac{1}{q} \right) \min \left\{ \tau, \left( \frac{\mu}{q \kappa_{\tau_0, \omega}} \right)^{\frac{1}{q-1}} \right\} \right)^n \right), & \textrm{ if } p = q, \\ 
O \left( \frac{\left( \kappa_{\tau_0, \omega}^p / \mu^q \right)^{\frac{1}{p-q}}}{n^{\frac{p(q-1)}{p-q}}} \right), & \textrm{ if } p > q, \end{cases}
\end{equation*}
where $\kappa_{\tau_0, \omega}$ was defined in~\cref{kASM}.
\end{proposition}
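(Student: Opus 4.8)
The plan is to start from the already-established chain \cref{starting_point} and to mimic, almost verbatim, the convergence analysis of gradient methods under a sharpness condition carried out in \cite[Appendices~A.3 and~A.4]{Park:2020}. The point that makes this reuse legitimate is that the final line of \cref{starting_point} bounds $E(\unn)$ by a minimization problem whose coefficients involve $\tau_0$ rather than the running step size $\taunn$: this substitution is valid precisely because \cref{Lem:M_upper} shows that the right-hand side of \cref{M_upper} is decreasing in $\tau$, while \cref{Lem:back} guarantees $\taunn \geq \tau_0$. Consequently the entire argument reduces to that of \cref{Prop:conv_sharp} applied to \cref{Alg:ASM} run at the single fixed step $\tau = \tau_0$, which is exactly why $\kappa_{\tau_0, \omega}$, and not $\kappa_{\taunn, \omega}$, governs the rate.

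First I would restrict the minimization in \cref{starting_point} to the segment $u = (1-s)\un + s u^*$ with $s \in (0, \tau_0]$. Since $K_0$ is convex and contains both $\un$ (by \cref{Lem:monotone}) and $u^*$, every such $u$ lies in $K_0$ and is admissible; moreover a direct computation gives $\frac{1}{\tau_0}u - (\frac{1}{\tau_0}-1)\un = (1-\frac{s}{\tau_0})\un + \frac{s}{\tau_0}u^*$, which is again a convex combination because $s \leq \tau_0$. Applying convexity of $G$ to the two $G$-terms, convexity of $F$ to the linear term, and $\|u-\un\|^q = s^q\|u^*-\un\|^q$ to the power term collapses \cref{starting_point} into the clean recursion
\begin{equation*}
E(\unn) \leq (1-s)E(\un) + s E(u^*) + \frac{\kappa_{\tau_0, \omega}}{q}\, s^q \|\un - u^*\|^q, \qquad s \in (0, \tau_0].
\end{equation*}
Writing $\Delta_n = E(\un) - E(u^*)$ and invoking the sharpness \cref{Ass:sharp} on $K_0$ (with constant $\mu = \mu_{K_0}$) to replace $\|\un - u^*\|^q$ by $(p\Delta_n/\mu)^{q/p}$ then yields $\Delta_{n+1} \leq (1-s)\Delta_n + \frac{\kappa_{\tau_0,\omega}}{q}(p/\mu)^{q/p} s^q \Delta_n^{q/p}$ for every admissible $s$.

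It remains to optimize over $s$ and unwind the recursion in the two regimes. For $p = q$ the bound is $\Delta_{n+1} \leq (1 - s + a s^q)\Delta_n$ with $a = \kappa_{\tau_0,\omega}/\mu$; minimizing $1 - s + a s^q$ over $(0, \tau_0]$ gives a geometric factor bounded by $1 - (1-\frac{1}{q})\min\{\tau_0, (\mu/(q\kappa_{\tau_0,\omega}))^{1/(q-1)}\}$, which is the claimed linear rate. For $p > q$, choosing the unconstrained optimal $s$ (admissible for all large $n$ since it tends to $0$) produces $\Delta_{n+1} \leq \Delta_n - c\,\Delta_n^{\gamma}$ with $\gamma = \frac{q(p-1)}{p(q-1)} > 1$ and $c$ an explicit multiple of $(aq)^{-1/(q-1)}$; a standard lemma on such super-unit-power recursions gives $\Delta_n = O(n^{-1/(\gamma-1)})$, and substituting $1/(\gamma-1) = p(q-1)/(p-q)$ together with the value of $c$ recovers both the exponent $n^{-p(q-1)/(p-q)}$ and the constant $(\kappa_{\tau_0,\omega}^p/\mu^q)^{1/(p-q)}$.

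The main obstacle is the $p > q$ case: converting the one-step estimate $\Delta_{n+1} \leq \Delta_n - c\,\Delta_n^{\gamma}$ into the sharp polynomial decay with the correct leading constant. This step is routine but technical, and is exactly the content of \cite[Appendix~A.4]{Park:2020}; the only adaptation required here is the bookkeeping that carries $\tau_0$ through every constant in place of a fixed $\tau$, which is already justified by \cref{Lem:back,Lem:M_upper}.
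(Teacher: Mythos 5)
Your proposal is correct and follows essentially the same route as the paper, which proves \cref{Prop:naive_sharp} simply by starting from the $\tau_0$-version of the bound in~\cref{starting_point} (justified exactly as you say, via \cref{Lem:back} and the monotonicity in \cref{Lem:M_upper}) and repeating the argument of \cite[Appendices~A.3 and~A.4]{Park:2020}. Your filled-in details (restriction to the segment $u=(1-s)\un+su^*$, convexity, sharpness, \cref{Lem:min}, and the recursion lemma) coincide with the computations the paper later carries out explicitly in the proof of \cref{Thm:conv_sharp}, with $\tau_0$ in place of $\taunn$.
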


Although \cref{Prop:naive,Prop:naive_sharp} guarantee the convergence to the energy minimum as well as they provide the order of convergence of \cref{Alg:back}, they are not fully satisfactory results in the sense that they are not able to explain why \cref{Alg:back} achieves faster convergence that \cref{Alg:ASM}.
In order to explain the acceleration property of the backtracking process, one should obtain an estimate for the convergence rate of \cref{Alg:back} in terms of the step sizes $\{ \taun \}$ along the iterations~\cite{CC:2019}.
We first state an elementary lemma that will be used in further analysis of \cref{Alg:back}~(cf.~\cite[Lemma~3.2]{TX:2002}).

\begin{lemma}
\label{Lem:recur}
Suppose that $a, b>0$ satisfy the inequality
\begin{equation*}
a - b \geq C a^{\gamma},
\end{equation*}
where $C> 0$ and $\gamma > 1$.
Then we have
\begin{equation*}
b \leq (C (\gamma - 1) + a^{1-\gamma})^{\frac{1}{1-\gamma}} < a.
\end{equation*}
\end{lemma}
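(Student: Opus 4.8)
The plan is to reduce the claimed bound to the generalized Bernoulli inequality by exploiting the monotonicity of the map $s \mapsto s^{1-\gamma}$. First I would record the two immediate consequences of the hypothesis $a - b \geq Ca^\gamma$: rearranging gives $b \leq a - Ca^\gamma$, and since $b > 0$ this forces $a - Ca^\gamma \geq b > 0$, hence $Ca^{\gamma-1} < 1$. This positivity is exactly what makes the fractional power $(a - Ca^\gamma)^{1-\gamma}$ well defined, and it is the only place where the hypothesis $b > 0$ is genuinely used.

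Next, because $\gamma > 1$ the exponent $1-\gamma$ is negative, so $s \mapsto s^{1-\gamma}$ is strictly decreasing on $(0,\infty)$. Writing $f(a) = (C(\gamma-1) + a^{1-\gamma})^{1/(1-\gamma)}$, I would observe that $f(a)^{1-\gamma} = C(\gamma-1) + a^{1-\gamma}$, so the target $b \leq f(a)$ is equivalent, via this decreasing map, to $b^{1-\gamma} \geq C(\gamma-1) + a^{1-\gamma}$. Since $0 < b \leq a - Ca^\gamma$ and the map is decreasing, $b^{1-\gamma} \geq (a - Ca^\gamma)^{1-\gamma}$, so it suffices to prove $(a - Ca^\gamma)^{1-\gamma} \geq C(\gamma-1) + a^{1-\gamma}$.

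The core estimate is then obtained by the substitution $x = Ca^{\gamma-1} \in [0,1)$. Factoring $a - Ca^\gamma = a(1-x)$ and using $C(\gamma-1) = (\gamma-1)x\,a^{1-\gamma}$, dividing through by $a^{1-\gamma} > 0$ turns the required inequality into $(1-x)^{1-\gamma} \geq 1 + (\gamma-1)x$. This is precisely Bernoulli's inequality for the negative exponent $\alpha = 1-\gamma \leq 0$ applied at $1+t$ with $t = -x \in (-1,0]$; equivalently it follows by checking that $\phi(x) = (1-x)^{1-\gamma} - 1 - (\gamma-1)x$ satisfies $\phi(0) = 0$ and $\phi'(x) = (\gamma-1)[(1-x)^{-\gamma} - 1] \geq 0$ on $[0,1)$, so that $\phi \geq 0$ there.

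Finally, the strict inequality $f(a) < a$ is handled separately and is immediate: applying the same decreasing map, $f(a) < a$ is equivalent to $f(a)^{1-\gamma} = C(\gamma-1) + a^{1-\gamma} > a^{1-\gamma}$, which holds because $C(\gamma-1) > 0$. I expect the main obstacle to be purely bookkeeping, namely keeping the direction of every inequality correct when raising to the negative power $1-\gamma$ and ensuring $a - Ca^\gamma > 0$ so that all fractional powers are legitimate; once the problem is rewritten in the variable $x$, the decisive step is just the recognition of Bernoulli's inequality.
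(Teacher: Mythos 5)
Your proof is correct. Structurally it follows the same reduction as the paper: since $b \leq a - Ca^{\gamma}$ and $s \mapsto s^{1-\gamma}$ is decreasing, everything hinges on the single inequality $(a - Ca^{\gamma})^{1-\gamma} \geq C(\gamma-1) + a^{1-\gamma}$, and both you and the paper prove exactly this. The difference is in how that central estimate is established: the paper applies the mean value theorem to $s \mapsto s^{1-\gamma}$ on the interval $(a - Ca^{\gamma}, a)$, obtaining an intermediate point $c$ and concluding from $(a/c)^{\gamma} > 1$, whereas you normalize by $a^{1-\gamma}$ via the substitution $x = Ca^{\gamma-1} \in (0,1)$ and reduce to the Bernoulli inequality $(1-x)^{1-\gamma} \geq 1 + (\gamma-1)x$ (or the equivalent monotonicity check on $\phi$). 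Your route is marginally cleaner in two respects: the observation that $b > 0$ already forces $a - Ca^{\gamma} > 0$ removes the need for the paper's ``we may assume'' step, and the dimensionless variable $x$ makes the sign bookkeeping for the negative exponent transparent. Both arguments are elementary and of comparable length, so this is a matter of taste rather than substance.
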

\begin{proof}
It suffices to show that $ ( C (\gamma - 1) + a^{1-\gamma})^{\frac{1}{1-\gamma}} \geq a - C a^{\gamma}$.
We may assume that $a - C a^{\gamma} > 0$.
By the mean value theorem, there exists a constant $c \in (a - Ca^{\gamma}, a)$ such that
\begin{multline*}
C(\gamma - 1) + a^{1-\gamma} - (a - Ca^{\gamma})^{1-\gamma} \\
= C(\gamma - 1) + Ca^{\gamma} (1-\gamma) c^{-\gamma}
= C(\gamma - 1) \left( 1 - \left( \frac{a}{c} \right)^{\gamma} \right) \leq 0.
\end{multline*}
Hence, we have $C(\gamma - 1)  + a^{1-\gamma} \leq (a-Ca^{\gamma})^{1-\gamma}$, which yields the desired result.
\end{proof}

We also need the following lemma that was presented in~\cite[Lemma~A.2]{Park:2020}.

\begin{lemma}
\label{Lem:min}
Let $a,b > 0$, $q > 1$, and $\theta > 0$.
The minimum of the function $g(t) = \frac{a}{q}t^q - bt$, $t \in [0, \theta]$ is given as follows:
\begin{equation*}
\min_{t \in [0, \theta]} g(t) = \begin{cases}
\frac{a}{q}\theta^q - b \theta & \textrm{ if } a \theta^{q-1} - b < 0, \\
-\frac{b (q-1)}{q} \left( \frac{b}{a} \right)^{\frac{1}{q-1}} & \textrm{ if } a \theta^{q-1} -b \geq 0. \end{cases}
\end{equation*}
\end{lemma}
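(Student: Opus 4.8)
The plan is to treat this as an elementary one-dimensional calculus problem, exploiting the convexity of $g$. First I would differentiate: since $g'(t) = a t^{q-1} - b$ and $g''(t) = a(q-1) t^{q-2} \geq 0$ for $t > 0$ (using $a > 0$ and $q > 1$), the function $g$ is convex on $[0, \theta]$. Consequently its minimum over the interval is attained either at the unique interior stationary point or at the right endpoint $\theta$, and I would locate the stationary point by solving $g'(t) = 0$, which gives $t^* = (b/a)^{1/(q-1)}$. Equivalently, one may note directly that $g'$ is increasing in $t$, so it changes sign at most once, from negative to positive, at $t^*$.

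The next step is to compare $t^*$ with $\theta$, which is exactly what the dichotomy in the statement encodes. Observe that $t^* \leq \theta$ is equivalent to $b/a \leq \theta^{q-1}$, i.e. to $a\theta^{q-1} - b \geq 0$. In the first case $a\theta^{q-1} - b < 0$ we have $t^* > \theta$, hence $g'(t) = a t^{q-1} - b < 0$ throughout $[0, \theta]$; thus $g$ is strictly decreasing there and the minimum is $g(\theta) = \frac{a}{q}\theta^q - b\theta$, as claimed. In the second case $a\theta^{q-1} - b \geq 0$, the stationary point $t^*$ lies in $[0, \theta]$, and by convexity it is the global minimizer, so the minimum equals $g(t^*)$.

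It remains to evaluate $g(t^*)$ explicitly, which is the only place requiring a short computation. Writing $(t^*)^q = (b/a)^{q/(q-1)} = \frac{b}{a}(b/a)^{1/(q-1)}$, I would simplify $\frac{a}{q}(t^*)^q = \frac{b}{q}(b/a)^{1/(q-1)}$ and subtract $b t^* = b(b/a)^{1/(q-1)}$ to obtain $g(t^*) = \left(\frac{1}{q} - 1\right) b (b/a)^{1/(q-1)} = -\frac{b(q-1)}{q}(b/a)^{1/(q-1)}$, matching the second branch. There is no genuine obstacle here; the only care needed is the bookkeeping of the fractional exponents in this last simplification, together with a quick check that the two formulas agree at the boundary value $a\theta^{q-1} - b = 0$ (where $t^* = \theta$), which reconciles the $\geq$ in the statement.
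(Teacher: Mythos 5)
Your argument is correct and complete: the monotonicity of $g'(t)=at^{q-1}-b$ reduces everything to comparing the unique stationary point $t^*=(b/a)^{1/(q-1)}$ with $\theta$, and your evaluation of $g(t^*)$ via $(t^*)^q=\tfrac{b}{a}(b/a)^{1/(q-1)}$ is the right bookkeeping. Note that the paper itself gives no proof of this lemma---it is imported by citation from an earlier work---so there is nothing to diverge from; your elementary calculus derivation is exactly the intended one.
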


Now, we present a convergence theorem for \cref{Alg:back} that reveals the dependency of the convergence rate on the step sizes $\{ \taun \}$ determined by the backtracking process.
More precisely, the following theorems show that the convergence rate of \cref{Alg:back} is dependent on the \textit{$\nu$-averaged additive Schwarz condition number} $\bar{\kappa}_{\nu}$ defined by
\begin{equation}
\label{kASM_averaged}
\bar{\kappa}_{\nu} = \left( \frac{1}{n} \sum_{j=1}^n \kappa_{\tau^{(j)}, \omega}^{\nu} \right)^{\frac{1}{\nu}},
\end{equation}
where $\nu = - \frac{1}{q-1}$ and $\kappa_{\tau^{(j)},\omega}$ was defined in~\cref{kASM}.

\begin{theorem}
\label{Thm:conv}
Suppose that \cref{Ass:stable,Ass:convex,Ass:local} hold.
In \cref{Alg:back}, if $\zeta_0 := E(u^{(0)}) - E(u^*) \leq \omega C_{0, K_0}^q R_0^q$, then
\begin{equation*}
E(\un) - E(u^*)  \leq \frac{q^{q-1}R_0^q \bar{\kappa}_{\nu}}{\left( n + q \left( R_0^q \bar{\kappa}_{\nu} \middle/ \zeta_0 \right)^{-\nu} \right)^{q-1}}
\end{equation*}
for $n \geq 1$, where $K_0$, $R_0$, and $\bar{\kappa}_{\nu}$ were given in~\cref{K0,R0,kASM_averaged}, respectively.
\end{theorem}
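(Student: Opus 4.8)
The plan is to sharpen the chain~\cref{starting_point} so that the per-iteration estimate retains the \emph{actual} step size $\taunn$ rather than its uniform lower bound $\tau_0$, and then to turn the resulting nonlinear recursion into a telescoping sum whose accumulated term is exactly the power mean $\bar{\kappa}_{\nu}$. Writing $\zeta_n := E(\un) - E(u^*)$, I would first revisit step~(iii) of~\cref{starting_point}: instead of invoking the right-hand side of~\cref{M_upper} at $\tau_0$, apply \cref{Lem:M_upper} directly at $\tau = \taunn$ (valid since that lemma holds for every positive $\tau$, as its proof emphasizes), with $K = K_0$ so that $K_{\taunn}' = K_{\taunn}$. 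This yields, for every $n \geq 0$,
\[
E(\unn)\le\min_{u\in K_0}\Big\{ F(\un)+\langle F'(\un),u-\un\rangle+\frac{\omega C_{0,K_{\taunn}}^q}{q(\taunn)^{q-1}}\|u-\un\|^q+\taunn G\Big(\frac{1}{\taunn}u-\Big(\frac{1}{\taunn}-1\Big)\un\Big)+(1-\taunn)G(\un)\Big\},
\]
i.e. a bound governed by $\kappa_{\taunn,\omega}$ rather than $\kappa_{\tau_0,\omega}$.

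Next I would evaluate the minimand at the test point $u = (1-s)\un + su^*$ for $s\in[0,\min\{1,\taunn\}]$, which lies in $K_0$ by convexity of $K_0$. A short computation shows the $G$-argument equals $(1-\tfrac{s}{\taunn})\un + \tfrac{s}{\taunn}u^*$, so convexity of $G$ collapses the two $G$-terms to $(1-s)G(\un)+sG(u^*)$, while convexity of $F$ gives $\langle F'(\un),u^*-\un\rangle \le F(u^*)-F(\un)$; together with $\|\un - u^*\|\le R_0$ from \cref{Lem:monotone} and~\cref{R0} this produces the scalar recursion
\[
\zeta_{n+1}\le(1-s)\zeta_n+\frac{\kappa_{\taunn,\omega}R_0^q}{q}\,s^q .
\]
Minimizing the right side in $s$ via \cref{Lem:min} (with $a=\kappa_{\taunn,\omega}R_0^q$, $b=\zeta_n$, $\theta=\min\{1,\taunn\}$) and landing in its second branch gives the per-step decay $\zeta_n-\zeta_{n+1}\ge \tfrac{q-1}{q}\big(\kappa_{\taunn,\omega}R_0^q\big)^{\nu}\zeta_n^{q/(q-1)}$, where $\nu=-\tfrac{1}{q-1}$ as in~\cref{kASM_averaged}.

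The decisive manipulation is then to apply \cref{Lem:recur} with $\gamma=\tfrac{q}{q-1}$ (so $\gamma-1=-\nu$ and $\tfrac{1}{1-\gamma}=-(q-1)$) and raise the resulting inequality to the power $\nu<0$, which reverses its direction and \emph{linearizes} it into $\zeta_{n+1}^{\nu}\ge \zeta_n^{\nu}+\tfrac{R_0^{q\nu}}{q}\kappa_{\taunn,\omega}^{\nu}$. Telescoping this from $0$ to $n-1$ gives $\zeta_n^{\nu}\ge \zeta_0^{\nu}+\tfrac{R_0^{q\nu}}{q}\sum_{j=1}^{n}\kappa_{\tau^{(j)},\omega}^{\nu}=\zeta_0^{\nu}+\tfrac{nR_0^{q\nu}}{q}\bar{\kappa}_{\nu}^{\nu}$, where the last equality is just the definition~\cref{kASM_averaged} of the power mean; raising to $1/\nu$ and factoring out $\tfrac{R_0^{q\nu}\bar{\kappa}_{\nu}^{\nu}}{q}$ (using $\nu(q-1)=-1$) rearranges exactly into the claimed bound. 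The main obstacle is the bookkeeping needed to guarantee that the \emph{second} branch of \cref{Lem:min} is the operative one at every iteration: this requires $\zeta_n\le \omega C_{0,K_{\taunn}}^qR_0^q$, which I would secure from the standing hypothesis $\zeta_0\le \omega C_{0,K_0}^qR_0^q$ together with two monotonicities—$\zeta_n\le\zeta_0$ from \cref{Lem:monotone}, and $C_{0,K_0}\le C_{0,K_{\taunn}}$, which follows because $K_0\subseteq K_{\taunn}$ (take $u=v$ in~\cref{Ktau}) so that \cref{Ass:stable} forces the constants to increase with the set. Care is also needed with the feasibility window $s\in[0,\min\{1,\taunn\}]$, but since the optimal value in the second branch of \cref{Lem:min} is independent of $\theta$, the truncation of $s$ does not affect the final estimate.
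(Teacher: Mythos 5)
Your proof is correct and follows essentially the same route as the paper: apply \cref{Lem:M_upper} at the actual step size $\taunn$ in step~(iii) of~\cref{starting_point}, test the resulting bound along the segment toward $u^*$, invoke the second branch of \cref{Lem:min} (justified by $\zeta_n\le\zeta_0\le\omega C_{0,K_0}^qR_0^q\le\omega C_{0,K_{\taunn}}^qR_0^q$), linearize the recursion with \cref{Lem:recur}, and telescope so that the power mean $\bar{\kappa}_{\nu}$ emerges. The only deviation is cosmetic (parametrizing the test point by $s$ directly instead of introducing $\tilde{u}$, and truncating the window at $\min\{1,\taunn\}$), and your algebra reproduces the stated constant exactly.
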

\begin{proof}
We take any $n \geq 0$ and write $\zeta_n = E(\un) - E(u^*)$.
For $u \in K_0$, we write
\begin{equation*}
\tilde{u} = \frac{1}{\taunn} u - \left( \frac{1}{\taunn} - 1 \right) \un,
\end{equation*}
so that $u - \un = \taunn (\tilde{u} - \un)$.
It follows that
\begin{equation}
\label{conv_proof1}
\begin{split}
E&(\unn) \stackrel{\cref{starting_point}}{\leq} \min_{u \in K_0} \left\{ F(\un) + \langle F'(\un), u - \un \rangle + M_{\taunn, \omega} (u, \un) \right\} \\
&\stackrel{\textrm{(i)}}{\leq} \min_{u \in K_0} \Bigg\{ F(\un) + \taunn \langle F'(\un), \tilde{u} - \un \rangle + \frac{\taunn \omega C_{0,K_{\taunn}}^q}{q} \| \tilde{u} - \un \|^q \\
&\quad\quad\quad\quad + \taunn G(\tilde{u}) + (1-\taunn) G(\un) \Bigg\} \\
&\stackrel{\textrm{(ii)}}{\leq} \min_{u \in K_0} \left\{ (1-\taunn) E(\un) + \taunn E(\tilde{u}) + \frac{\taunn \omega C_{0,K_{\taunn}}^q}{q} \| \tilde{u} - \un \|^q \right\},
\end{split}
\end{equation}
where (i) is due to \cref{Lem:M_upper} and (ii) is due to the convexity of $F$.
If we set $u = t u^* + (1-t) \un$ for $t \in [0, \taunn] $, then $u \in K_0$ and
\begin{equation}
\label{tu}
\tilde{u} = \frac{t}{\taunn}u^* + \left( 1 - \frac{t}{\taunn} \right) \un \in K_0.
\end{equation}
Substituting~\cref{tu} into~\cref{conv_proof1} yields
\begin{equation}
\label{conv_proof2}
\resizebox{\hsize}{!}{ $\displaystyle \begin{split}
E (\unn) &\leq \min_{t \in [0, \taunn]} \Bigg\{ ( 1- \taunn) E(\un) + \taunn E \left( \frac{t}{\taunn}u^* + \left( 1 - \frac{t}{\taunn} \right) \un \right) \\
&\quad\quad\quad\quad\quad\quad + \frac{\omega C_{0, K_{\taunn}}^q t^q}{q (\taunn)^{q-1}} \| u^* - \un \|^q \Bigg\} \\
&\leq \min_{t \in [0, \taunn]} \left\{ E(\un) - t \zeta_n + \frac{\omega C_{0, K_{\taunn}}^q t^q}{q (\taunn)^{q-1}} R_0^q \right\},
\end{split}
$}
\end{equation}
where the last inequality is due to the convexity of $E$ and~\cref{R0}.
The definition~\cref{Ktau} of $K_{\taunn}$ implies that $K_0 \subseteq K_{\taunn}$, so that $C_{0, K_0} \leq C_{0, K_{\taunn}}$.
Hence, we have $\zeta_n \leq \omega C_{0, K_{\taunn}}^q R_0^q$.
Invoking~\cref{Lem:min}, we get
\begin{equation}
\label{conv_proof3}
\min_{t \in [0, \taunn]} \left\{  - t \zeta_n + \frac{\omega C_{0, K_{\taunn}}^q t^q}{q (\taunn)^{q-1}} R_0^q \right\} = - \frac{q-1}{q} \frac{1}{(\kappa_{\taunn, \omega} R_0^q)^{\frac{1}{q-1}}} \zeta_n^{\frac{q}{q-1}},
\end{equation} 
where $\kappa_{\taunn, \omega}$ was defined in~\cref{kASM}.
Combining~\cref{conv_proof2,conv_proof3} yields
\begin{equation*}
\zeta_n - \zeta_{n+1} \geq \frac{q-1}{q} \frac{1}{(\kappa_{\taunn, \omega} R_0^q)^{\frac{1}{q-1}}} \zeta^{\frac{q}{q-1}}.
\end{equation*}
By \cref{Lem:recur}, it follows that
\begin{equation}
\label{conv_proof4}
\frac{1}{\zeta_{n+1}^{-\nu}} \geq \frac{1}{\zeta_n^{-\nu}} + \frac{1}{q \left( \kappa_{\taunn, \omega} R_0^q \right)^{-\nu}}.
\end{equation}
Summation of~\cref{conv_proof4} over $0, \dots, n-1$ yields
\begin{equation*}
\frac{1}{\zeta_n^{-\nu}} \geq \frac{1}{\zeta_0^{-\nu}} + \frac{n}{q \left( \bar{\kappa}_{\nu} R_0^q \right)^{-\nu}},
\end{equation*}
or equivalently,
\begin{equation*}
\zeta_n \leq \frac{q^{-\frac{1}{\nu}} R_0^q \bar{\kappa}_{\nu}}{\left( n + q \left( R_0^q \bar{\kappa}_{\nu} / \zeta_0 \right)^{-\nu} \right)^{-\frac{1}{\nu}}},
\end{equation*}
which is the desired result.
\end{proof}

\begin{theorem}
\label{Thm:conv_sharp}
Suppose that \cref{Ass:stable,Ass:convex,Ass:local,Ass:sharp} hold.
In \cref{Alg:back}, we have the following:
\begin{enumerate}
\item In the case $p = q$, we have
\begin{multline}
\label{conv_sharp1}
\frac{E(\unn) - E(u^*)}{E(\un) - E(u^*)} \\
 \leq 1 - \taunn \min \left\{ 1 - \frac{\omega C_{0, K_{\taunn}}^q}{\mu}, \frac{q-1}{q} \left( \frac{\mu}{q \omega C_{0, K_{\taunn}}} \right)^{\frac{1}{q-1}} \right\} 
\end{multline}
for $n \geq 0$.
In particular, if $\mu \leq q \omega C_{0, K_0}^q$, then 
\begin{equation*}
E(\un) - E(u^*)
\leq \left(1 - \frac{q-1}{q} \left( \frac{\mu}{q \bar{\kappa}_{\nu}} \right)^{-\nu} \right)^n ( E(u^{(0)} - E(u^*) )
\end{equation*}
for $n \geq 1$, where $K_0$ and $\bar{\kappa}_{\nu}$ were given in~\cref{K0,kASM_averaged}, respectively.

\item In the case $p > q$, if $\zeta_0 : = E(u^{(0)}) - E(u^*) \leq (p^{\frac{q}{p}} \omega C_{0,K_0}^q / \mu^{\frac{q}{p}})^{\frac{p}{p-q}}$, then
\begin{equation*}
E(\un) - E(u^*) \leq \frac{\left( \dfrac{pq}{p-q} \right)^{\frac{p(q-1)}{p-q}} \left( \dfrac{p^{\frac{q}{p}} \bar{\kappa}_{\nu}}{\mu^{\frac{q}{p}}}\right)^{\frac{p}{p-q}}}{\left( n + \dfrac{pq}{p-q} \left( \dfrac{p^{\frac{q}{p}} \bar{\kappa}_{\nu}}{\mu^{\frac{q}{p}}} \right)^{-\nu} \middle/ \zeta_0^{\frac{p-q}{p(q-1)}}\right)^{\frac{p(q-1)}{p-q}}}
\end{equation*}
for $n \geq 1$, where $K_0$ and $\bar{\kappa}_{\nu}$ were given in~\cref{K0,kASM_averaged}, respectively.
\end{enumerate}
\end{theorem}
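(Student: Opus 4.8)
The plan is to mimic the proof of \cref{Thm:conv}, but to exploit \cref{Ass:sharp} in place of the crude diameter bound $\|u^* - \un\| \le R_0$ from~\cref{R0}. Concretely, I would reuse that proof up to the first inequality in~\cref{conv_proof2} and then apply the convexity of $E$ to the middle term only, stopping short of~\cref{R0}, to reach
\begin{equation*}
\zeta_{n+1} \le \min_{t \in [0, \taunn]} \left\{ \zeta_n - t \zeta_n + \frac{\omega C_{0, K_{\taunn}}^q t^q}{q (\taunn)^{q-1}} \| \un - u^* \|^q \right\}, \quad \zeta_n := E(\un) - E(u^*).
\end{equation*}
Since $u^*, \un \in K_0$, \cref{Ass:sharp} with $K = K_0$ gives $\| \un - u^* \|^q \le (p \zeta_n / \mu)^{q/p}$; substituting this bound turns the bracketed expression into $\zeta_n + g(t)$ with $g(t) = \frac{a}{q} t^q - b t$, where $a = \frac{\omega C_{0, K_{\taunn}}^q}{(\taunn)^{q-1}} (p \zeta_n / \mu)^{q/p}$ and $b = \zeta_n$. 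Everything then hinges on evaluating $\min_{t \in [0, \taunn]} g(t)$ through \cref{Lem:min}, whose two branches are separated by the sign of $a (\taunn)^{q-1} - b$.

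For $p = q$ the exponent $q/p$ is $1$, so the branch condition reduces to comparing $\mu$ with $q \omega C_{0, K_{\taunn}}^q$. Reading off the two values of $\min g$ from \cref{Lem:min} and dividing by $\zeta_n$ produces the two quantities inside the minimum of~\cref{conv_sharp1}, and since in each branch the active value only improves on their minimum, the stated per-step contraction holds uniformly. For the ``in particular'' claim I would note $C_{0, K_0} \le C_{0, K_{\taunn}}$ (because $K_0 \subseteq K_{\taunn}$ by~\cref{Ktau}), so the hypothesis $\mu \le q \omega C_{0, K_0}^q$ forces the second branch at every step; rewriting the contraction factor through $\kappa_{\taunn, \omega} = \omega C_{0, K_{\taunn}}^q / (\taunn)^{q-1}$ gives $\zeta_{n+1}/\zeta_n \le 1 - \frac{q-1}{q} (\mu / (q \kappa_{\taunn, \omega}))^{-\nu}$. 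Taking the product over the iterates and applying the arithmetic--geometric mean inequality $\prod_j a_j \le (\frac{1}{n} \sum_j a_j)^n$ then does the decisive work: because $\bar{\kappa}_{\nu}$ in~\cref{kASM_averaged} is the power mean of exponent $\nu$, the arithmetic mean of the factors collapses \emph{exactly} to $1 - \frac{q-1}{q} (\mu / (q \bar{\kappa}_{\nu}))^{-\nu}$, which is the asserted bound.

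For $p > q$ I would first verify that the hypothesis $\zeta_0 \le (p^{q/p} \omega C_{0, K_0}^q / \mu^{q/p})^{p/(p-q)}$, combined with the monotonicity of $\{ \zeta_n \}$ from \cref{Lem:monotone} and $C_{0, K_0} \le C_{0, K_{\taunn}}$, keeps the second branch of \cref{Lem:min} active for all $n$. That branch yields a recurrence $\zeta_n - \zeta_{n+1} \ge C_n \zeta_n^{\gamma}$ with $\gamma = \frac{q(p-1)}{p(q-1)} > 1$ and $C_n = \frac{q-1}{q} (\mu^{q/p} / (p^{q/p} \kappa_{\taunn, \omega}))^{1/(q-1)}$. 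Applying \cref{Lem:recur} converts this into $\zeta_{n+1}^{-\beta} \ge \zeta_n^{-\beta} + \beta C_n$ with $\beta = \gamma - 1 = \frac{p-q}{p(q-1)}$; summing over $0, \dots, n-1$ and invoking $\sum_{j=1}^n \kappa_{\tau^{(j)}, \omega}^{\nu} = n \bar{\kappa}_{\nu}^{\nu}$ from~\cref{kASM_averaged} gives $\zeta_n^{-\beta} \ge \zeta_0^{-\beta} + \frac{p-q}{pq} (\mu/p)^{q/(p(q-1))} n \bar{\kappa}_{\nu}^{\nu}$. Inverting this inequality and simplifying the exponents produces the displayed fraction.

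The main obstacle, and the step I would watch most closely, is the bookkeeping that ties the per-iteration numbers $\kappa_{\taunn, \omega}$ to the averaged condition number $\bar{\kappa}_{\nu}$. In the $p = q$ case this is the arithmetic--geometric mean step together with the identity that the arithmetic mean of the contraction factors is precisely the factor built from the power mean $\bar{\kappa}_{\nu}$; in the $p > q$ case it is the summation $\sum_j \kappa_{\tau^{(j)}, \omega}^{\nu} = n \bar{\kappa}_{\nu}^{\nu}$ followed by the exponent-chasing needed to isolate $\zeta_n$ from $\zeta_n^{-\beta}$. The supporting technical check in both cases is confirming that the second branch of \cref{Lem:min} is active throughout, which legitimizes the telescoping; here I would lean on $\zeta_n \le \zeta_0$ and $C_{0, K_0} \le C_{0, K_{\taunn}}$.
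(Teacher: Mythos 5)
Your proposal follows essentially the same route as the paper's proof: it starts from~\cref{conv_proof2}, replaces the crude bound $\|u^*-\un\|\le R_0$ by the sharpness estimate $\|\un-u^*\|^q\le (p\zeta_n/\mu)^{q/p}$ to arrive at the paper's~\cref{conv_proof5}, splits on the two branches of \cref{Lem:min} exactly as the paper does for~\cref{conv_sharp1}, and then uses the same AM--GM (equivalently, concavity of the logarithm) step for $p=q$ and the same \cref{Lem:recur} telescoping for $p>q$, with only a notational difference in the choice of $\beta$ (yours is the reciprocal of the paper's). The argument is correct and matches the paper's proof in all essential respects.
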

\begin{proof}
We again take any $n \geq 0$ and write $\zeta_n = E(\un) - E(u^*)$.
By~\cref{conv_proof2} and \cref{Ass:sharp}, we get
\begin{equation}
\label{conv_proof5}
\resizebox{\hsize}{!}{ $\displaystyle \begin{split}
E (\unn) &\leq \min_{t \in [0, \taunn]} \Bigg\{ ( 1- \taunn) E(\un) + \taunn E \left( \frac{t}{\taunn}u^* + \left( 1 - \frac{t}{\taunn} \right) \un \right) \\
&\quad\quad\quad\quad\quad\quad + \frac{\omega C_{0, K_{\taunn}}^q t^q}{q (\taunn)^{q-1}} \| u^* - \un \|^q \Bigg\} \\
&\leq \min_{t \in [0, \taunn]} \left\{ E(\un) - t \zeta_n + \frac{p^{\frac{q}{p}}\omega C_{0, K_{\taunn}}^q t^q}{q \mu^{\frac{q}{p}} (\taunn)^{q-1}} \zeta_n^{\frac{q}{p}} \right\}.
\end{split}
$}
\end{equation}

First, we consider the case $p = q$. 
\Cref{conv_proof5} reduces to
\begin{equation}
\label{conv_proof6}
\zeta_{n+1} \leq \min_{t \in [0, \taunn]} \left\{ 1 - t + \frac{\omega C_{0, K_{\taunn}}^q t^q}{\mu (\taunn)^{q-1}} \right\} \zeta_n.
\end{equation}
Invoking \cref{Lem:min}, we have
\begin{multline*}
\min_{t \in [0, \taunn]} \left\{- t + \frac{\omega C_{0, K_{\taunn}}^q t^q}{\mu (\taunn)^{q-1}} \right\} \\
\leq - \taunn \min \left\{ 1 - \frac{\omega C_{0, K_{\taunn}}^q}{\mu}, \frac{q-1}{q} \left( \frac{\mu}{q \omega C_{0, K_{\taunn}}} \right)^{\frac{1}{q-1}} \right\},
\end{multline*}
which implies~\cref{conv_sharp1}.
Now, we assume that $\mu \leq q \omega C_{0, K_0}^q$.
Since $\mu \leq q \omega C_{0, K_0}^q \leq q \omega C_{0, K_{\taunn}}^q$,
it follows by \cref{Lem:min} that
\begin{equation}
\label{conv_proof7}
\min_{t \in [0, \taunn]} \left\{- t + \frac{\omega C_{0, K_{\taunn}}^q t^q}{\mu (\taunn)^{q-1}} \right\}
= - \frac{q-1}{q} \left( \frac{\mu}{q \kappa_{\taunn, \omega}} \right)^{\frac{1}{q-1}},
\end{equation}
where $\kappa_{\taunn, \omega}$ was defined in~\cref{kASM}.
Combining~\cref{conv_proof6,conv_proof7}, it readily follows that
\begin{equation}
\label{conv_proof8}
\zeta_{n+1} \leq \left(1 - \frac{q-1}{q} \left( \frac{\mu}{q \kappa_{\taunn, \omega}} \right)^{\frac{1}{q-1}}  \right) \zeta_n.
\end{equation}
Applying~\cref{conv_proof8} recursively, we obtain
\begin{equation*}
\zeta_n\leq \zeta_0 \prod_{j=1}^n \left( 1 - \frac{q-1}{q} \left( \frac{\mu}{q \kappa_{\tau^{(j)}, \omega}} \right)^{-\nu} \right) 
\leq \left(1 - \frac{q-1}{q} \left( \frac{\mu}{q \bar{\kappa}_{\nu}} \right)^{-\nu} \right)^n \zeta_0,
\end{equation*}
where the last inequality is due to the concavity of the logarithmic function.

Next, we consider the case $p > q$; we assume that $\zeta_0 \leq (p^{\frac{q}{p}} \omega C_{0,K_0}^q / \mu^{\frac{q}{p}})^{\frac{p}{p-q}}$.
Using \cref{Lem:min} the fact that $C_{0,K_0} \leq C_{0, K_{\taunn}}$, one can deduce from~\cref{conv_proof5} that
\begin{equation*}
\zeta_n - \zeta_{n+1} \geq \frac{q-1}{q} \left( \frac{\mu^{\frac{q}{p}}}{p^{\frac{q}{p}} \kappa_{\taunn, \omega}} \right)^{\frac{1}{q-1}} \zeta_n^{\frac{q(p-1)}{p(q-1)}}.
\end{equation*}
Invoking \cref{Lem:recur} yields
\begin{equation}
\label{conv_proof9}
\frac{1}{\zeta_{n+1}^{\frac{1}{\beta}}} \geq \frac{1}{\zeta_{n}^{\frac{1}{\beta}}} + \frac{p-q}{pq} \left( \frac{\mu^{\frac{q}{p}}}{p^{\frac{q}{p}} \kappa_{\taunn, \omega}} \right)^{-\nu},
\end{equation}
where $\beta = \frac{p(q-1)}{p-q}$.
Applying~\cref{conv_proof9} recursively, we get
\begin{equation*}
\frac{1}{\zeta_n^{\frac{1}{\beta}}} \geq \frac{1}{\zeta_0^{\frac{1}{\beta}}} + \frac{p-q}{pq} \left( \frac{\mu^{\frac{q}{p}}}{p^{\frac{q}{p}} \bar{\kappa}_{\nu}} \right)^{-\nu} n,
\end{equation*}
which is equivalent to
\begin{equation*}
\zeta_n \leq \frac{\left( \frac{pq}{p-q}\right)^{\beta} \left( \frac{p^{\frac{q}{p}} \bar{\kappa}_{\nu}}{\mu^{\frac{q}{p}}}\right)^{-\beta \nu}}{\left( n + \frac{pq}{p-q} \left( \frac{p^{\frac{q}{p}} \bar{\kappa}_{\nu}}{\mu^{\frac{q}{p}}}\right)^{-\nu} \middle/ \zeta_0^{\frac{1}{\beta}} \right)^{\beta}}.
\end{equation*}
This completes the proof.
\end{proof}

\begin{remark}
\label{Rem:recur}
If one sets $\taun = \tau$ for all $n$ in the proof of \cref{Thm:conv}, then the following estimate for the convergence rate of  \cref{Alg:ASM} is obtained:
\begin{equation*}
E(\un) - E(u^*) \leq \frac{q^{q-1} R_0^q \kappa_{\tau, \omega}}{ \left( n + q \left( R_0^q \kappa_{\tau, \omega} / \zeta_0 \right)^{\frac{1}{q-1}} \right)^{q-1}}.
\end{equation*}
This estimate is asymptotically equivalent to~\cite[Theorem~4.7]{Park:2020}, but differs by a multiplicative constant.
A similar remark can be made for \cref{Thm:conv_sharp} and~\cite[Theorem~4.8]{Park:2020}.
\end{remark}

Similar to the discussions made in~\cite{CC:2019}, \cref{Thm:conv,Thm:conv_sharp} can be interpreted as follows: since the convergence rate of \cref{Alg:back} depends on the averaged quantity~\cref{kASM_averaged}, adaptive adjustment of $\tau$ depending on the local flatness of the energy functional can be reflected to the convergence rate of the algorithm.
As we observed in \cref{Lem:back}, $\tau^{(n)}$ in \cref{Alg:back} is always greater than or equal to $\tau_0$.
Therefore, \cref{Thm:conv,Thm:conv_sharp} imply that \cref{Alg:back} enjoys better convergence rate estimates than \cref{Alg:ASM}.

\section{Further acceleration by momentum}
\label{Sec:Momentum}
In the author's recent work~\cite{Park:2021b}, it was shown that the convergence rate of the additive Schwarz method can be significantly improved if an appropriate momentum acceleration scheme~(see, e.g.,~\cite{BT:2009,Nesterov:2013}) is applied.
More precisely, \cref{Alg:ASM} was integrated with the FISTA~(Fast Iterative Shrinkage-Thresholding Algorithm) momentum~\cite{BT:2009} and the gradient adaptive restarting scheme~\cite{OC:2015} to form an accelerated version of the method; see~\cite[Algorithm~5]{Park:2021b}.

Meanwhile, two acceleration schemes for gradient methods, full backtracking and momentum, are compatible to each other; they can be applied to a gradient method simultaneously without disturbing each other and reducing their accelerating effects. 
Indeed, some notable works on full backtracking~\cite{CC:2019,Nesterov:2013,SGB:2014} considered momentum acceleration of gradient methods with full backtracking.
In this viewpoint, we present an further accelerated variant of \cref{Alg:back} in \cref{Alg:unified}, which is a unification of the ideas from~\cite[Algorithm~5]{Park:2021b} and~\cref{Alg:back}.

\begin{algorithm}[]
\caption{Additive Schwarz method for~\eqref{model} with backtracking and momentum}
\begin{algorithmic}[]
\label{Alg:unified}
\STATE Choose $u^{(0)} = v^{(0)} \in \dom G$, $\tau^{(0)}  =  \tau_0$, $\omega \geq \omega_0$, $\rho \in (0, 1)$, and $t_0 = 1$.
\FOR{$n=0,1,2,\dots$}
\STATE \vspace{-0.5cm} \begin{equation*}
\resizebox{0.9\hsize}{!}{ $\displaystyle
w_k^{(n+1)} \in \argmin_{w_k \in V_k} \left\{ F(\vn) + \langle F'(\vn), \Rw \rangle + \omega d_k (w_k, \vn) + G_k (w_k, \vn) \right\}, \gap 1 \leq k \leq N
$}
\end{equation*}
\vspace{-1cm}
\STATE \begin{equation*}
\tau \leftarrow \taun / \rho
\end{equation*}
\vspace{-0.4cm}
\REPEAT
\STATE \vspace{-1cm} \item \begin{equation*}
\unn = \vn + \tau \sumk \Rw^{(n+1)}
\end{equation*}
\vspace{-0.2cm}
\IF{$\displaystyle E(\unn) > (1- \tau N) E(\vn) + \tau \sumk E (\vn + R_k^* w_k^{(n+1)} )$}
\STATE \vspace{-0.3cm} \begin{equation*}
\tau \leftarrow \rho \tau
\end{equation*}
\ENDIF
\UNTIL{$\displaystyle E(\unn) \leq (1- \tau N) E(\vn) + \tau \sumk E(\vn + R_k^* w_k^{(n+1)} )$}
\STATE \vspace{-0.3cm} \begin{equation*}
\taunn= \tau
\end{equation*}
\vspace{-1cm}
\STATE \begin{equation*}
\begin{cases}
t_{n+1} = 1, \gap
\beta_n = 0,
& \textrm{ if } \langle \vn - \unn, \unn - \un \rangle  > 0, \\
t_{n+1} = \frac{1 + \sqrt{1 + 4t_n^2}}{2}, \gap
\beta_n = \frac{t_n - 1}{t_{n+1}},
& \textrm{ otherwise.} 
\end{cases}
\end{equation*}
\vspace{-0.8cm}
\STATE \begin{equation*}
\vnn = \unn + \beta_n (\unn - \un)
 \end{equation*} 
\ENDFOR
\end{algorithmic}
\end{algorithm}

As mentioned in~\cite{Park:2021b}, a major advantage of the momentum acceleration scheme used in \cref{Alg:unified} is that a priori information on the sharpness of the energy $E$ such as the values of $p$ and $\mu_K$ in \cref{Ass:sharp} is not required.
Such adaptiveness to the properties of the energy has become an important issue on the development of first-order methods for convex optimization; see, e.g.,~\cite{OC:2015,RG:2021}.
Compared to \cref{Alg:back}, the additional computational cost of \cref{Alg:unified} comes from the computation of momentum parameters $t_n$ and $\beta_n$, which is clearly marginal.
Therefore, the main computational cost of each iteration of \cref{Alg:unified} is essentially the same as the one of \cref{Alg:back}.
Nevertheless, we will observe in \cref{Sec:Numerical} that \cref{Alg:unified} achieves much faster convergence to the energy minimum compared to \cref{Alg:ASM,Alg:back}.

For completeness, we present a brief explanation on why \cref{Alg:unified} achieves faster convergence than \cref{Alg:back}; one may refer to~\cite{OC:2015,Park:2021b} for more details.
On the one hand, the recurrence formula
$t_{n+1} = (1 + \sqrt{1 + 4t_n^2})/2$
for the momentum parameter $t_n$ in \cref{Alg:unified} is the same as that in FISTA~\cite{BT:2009}.
Hence, the overrelaxation step $\vnn = \beta_n ( \unn - \un)$ in \cref{Alg:unified} is expected to result acceleration of the convergence by the same principle as in FISTA; see~\cite[Figure~8.5]{GBC:2016} for a graphical description of momentum acceleration.
On the other hand, the restart criterion $\langle \vn - \unn, \unn - \un \rangle > 0$ in \cref{Alg:unified} means that the update direction $\unn - \un$ is on the same side of the $M_{\tau,\omega}$-gradient direction $\vn - \unn$.
In the sense that the energy decreases fastest toward the minus gradient direction, satisfying the restart criterion implies that the overrelaxation step was not beneficial, so that we reset the overrelaxation parameter $\beta_n$ as $0$.
In view of dynamical systems, it was observed in~\cite{OC:2015} that the restarting scheme used in \cref{Alg:unified} prevents underdamping of a dynamical system representing the algorithm, so that oscillations of the energy do not occur.

\section{Numerical results}
\label{Sec:Numerical}
In order to show the computational efficiency of \cref{Alg:back,Alg:unified}, we present numerical results applied to various convex optimization problems.
As in~\cite{Park:2021b}, the following three model problems are considered: $s$-Laplace equation~\cite{TX:2002} with two-level domain decomposition, obstacle problem~\cite{BTW:2003,Tai:2003,THX:2002} with two-level domain decomposition, and dual total variation~(TV) minimization~\cite{CTWY:2015,Park:2021a} with one-level domain decomposition.
All the details such as problem settings, finite element discretization, space decomposition, stop criteria for local and coarse problems, and initial parameter settings for the algorithms are set in the same manner as in~\cite[section~4]{Park:2021b} unless otherwise stated, so that we omit them.
We set the fine mesh size $h$, coarse mesh size $H$, and overlapping width $\delta$ among subdomains by $h=1/2^6$, $H = 1/2^3$, and $\delta/h = 4$, respectively, in all experiments.

\begin{figure}[]
\centering
\subfloat[][$s$-Laplace equation]{ \includegraphics[width=0.31\linewidth]{./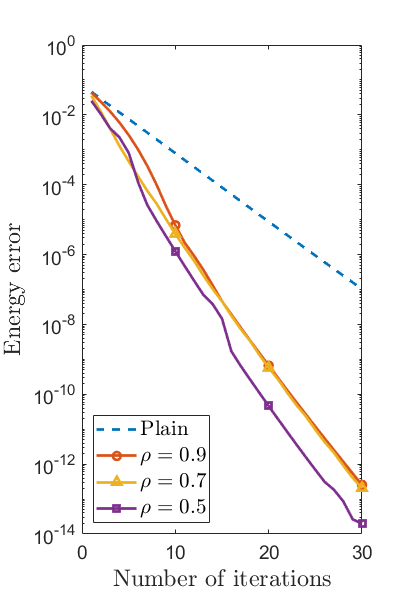} }
\subfloat[][Obstacle problem]{ \includegraphics[width=0.31\linewidth]{./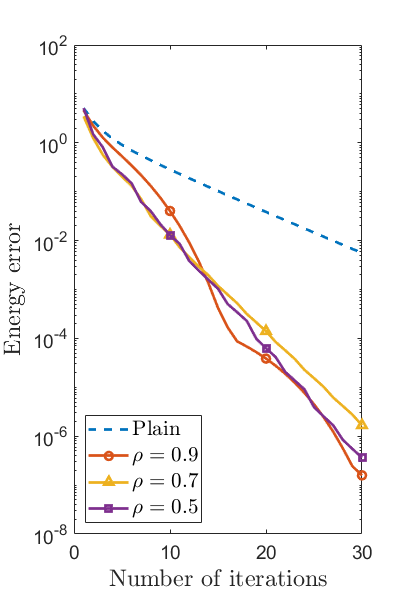} }
\subfloat[][Dual TV minimization]{ \includegraphics[width=0.31\linewidth]{./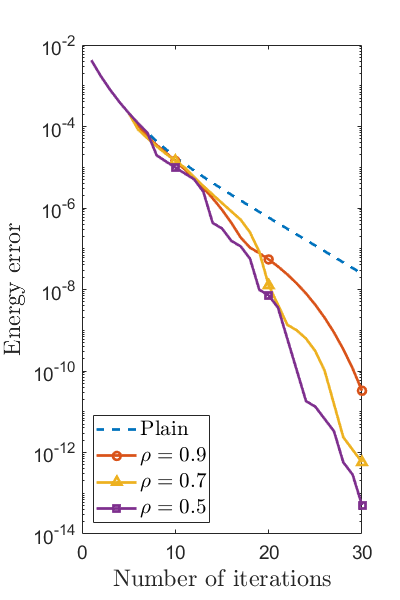} }
\caption{Decay of the energy error $E(\un) - E(u^*)$ in \cref{Alg:back} with respect to various $\rho$~($h = 1/2^6$, $H = 1/2^3$, $\delta = 4h$).
``Plain'' denotes \cref{Alg:ASM}.}
\label{Fig:rho}
\end{figure}

\begin{figure}[]
\centering
\subfloat[][$s$-Laplace equation]{ \includegraphics[width=0.31\linewidth]{./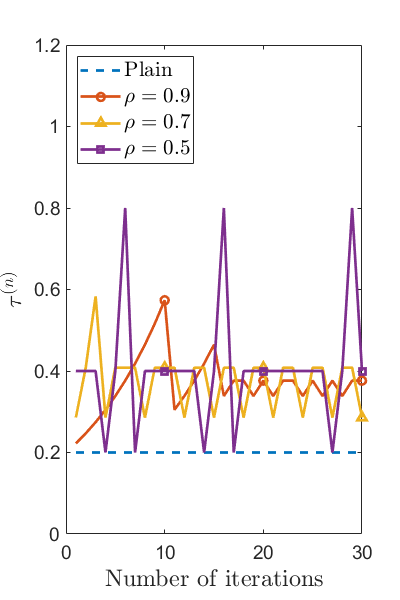} }
\subfloat[][Obstacle problem]{ \includegraphics[width=0.31\linewidth]{./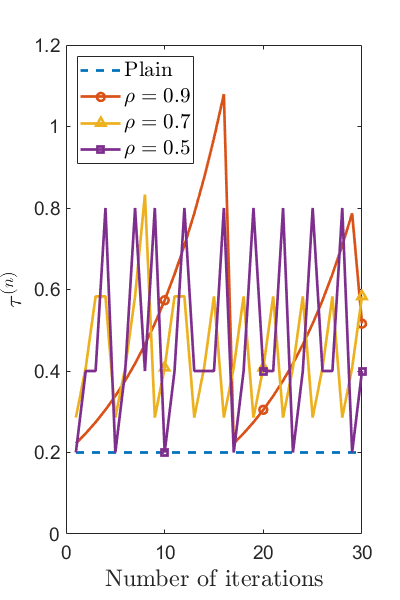} }
\subfloat[][Dual TV minimization]{ \includegraphics[width=0.31\linewidth]{./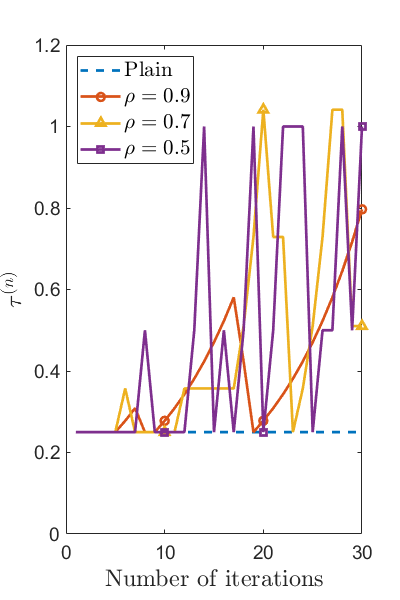} }
\caption{Step sizes $\{ \tau^{(n)} \}$ generated by \cref{Alg:back} with respect to various $\rho$~($h = 1/2^6$, $H = 1/2^3$, $\delta = 4h$).
``Plain'' denotes \cref{Alg:ASM}, which has the constant step size $\tau = \tau_0$.}
\label{Fig:tau}
\end{figure}

First, we observe how the choices of the adjustment parameter $\rho$ affect on the convergence behavior of \cref{Alg:back}.
\Cref{Fig:rho} plots the energy error $E(\un) - E(u^*)$ of \cref{Alg:ASM} and \cref{Alg:back} with $\rho \in \{ 0.5, 0.7, 0.9 \}$.
For every model problem and every value of $\rho$, \cref{Alg:back} shows faster convergence to the energy minimum than \cref{Alg:ASM}.
That is, \cref{Alg:back} outperforms \cref{Alg:ASM} regardless of the choice of $\rho$ in the sense of the convergence rate.
Indeed, as shown in \cref{Fig:tau}, the step sizes $\{ \tau^{(n)} \}$ generated by \cref{Alg:back} always exceed the step size $\tau_0$ of \cref{Alg:ASM}.
Hence, \cref{Fig:tau} verifies \cref{Lem:back}, and faster convergence of  \cref{Alg:back} can be explained by \cref{Thm:conv,Thm:conv_sharp}.
Meanwhile, \cref{Fig:rho,Fig:tau} do not show a clear pattern on the convergence rate of \cref{Alg:back} with respect to $\rho$.
It would be interesting to find a theoretically optimal $\rho$ that results in the fastest convergence rate of \cref{Alg:back}, which is left as a future work.

\begin{figure}[]
\centering
\subfloat[][$s$-Laplace equation]{ \includegraphics[width=0.31\linewidth]{./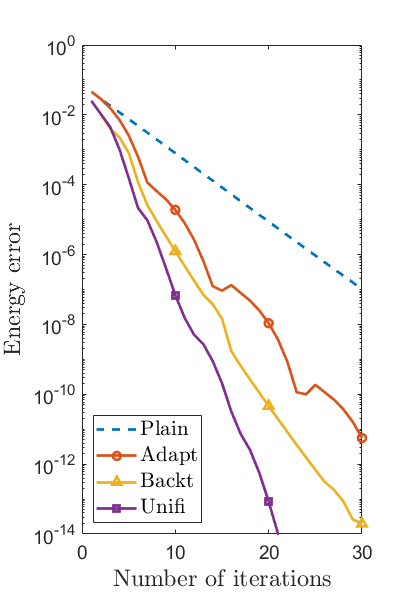} }
\subfloat[][Obstacle problem]{ \includegraphics[width=0.31\linewidth]{./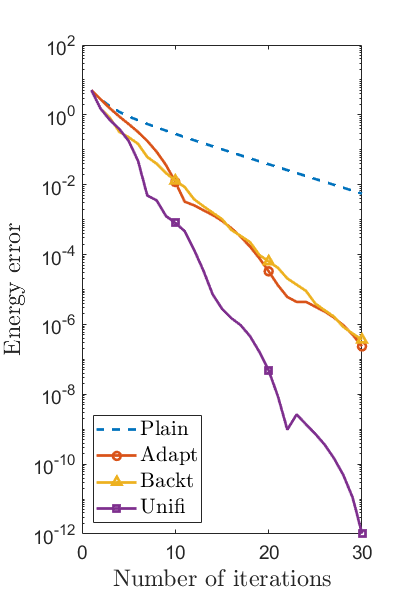} }
\subfloat[][Dual TV minimization]{ \includegraphics[width=0.31\linewidth]{./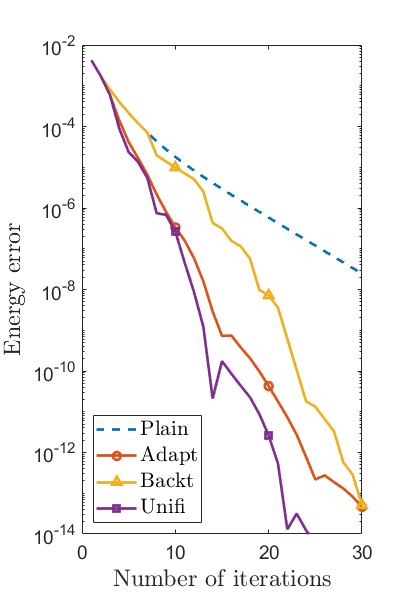} }
\caption{Decay of the energy error $E(\un) - E(u^*)$ in various additive Schwarz methods~($h = 1/2^6$, $H = 1/2^3$, $\delta = 4h$).
``Plain'', ``Adapt'', ``Backt'', and ``Unifi'' denote \cref{Alg:ASM}, Algorithm~5 of~\cite{Park:2021b}, \cref{Alg:back}~($\rho = 0.5$), and \cref{Alg:unified}~($\rho = 0.5$), respectively.}
\label{Fig:comparison}
\end{figure}

Next, we compare the performance of various additive Schwarz methods considered in this paper.
\Cref{Fig:comparison} plots $E(\un) - E(u^*)$ of \cref{Alg:ASM}~(Plain), Algorithm~5 of~\cite{Park:2021b}~(Adapt), \cref{Alg:back} with $\rho = 0.5$~(Backt), and \cref{Alg:unified} with $\rho = 0.5$~(Unifi).
In each of the model problems, the performance of Backt seems similar to that of Adapt.
More precisely, Backt outperforms Adapt in the $s$-Laplace problem, shows almost the same convergence rate as Adapt in the obstacle problem, and shows a bit slower energy decay than Adapt in the first several iterations but eventually arrive at the comparable energy error in the dual TV minimization.
Hence, we can say that the acceleration performance of Backt is comparable to that of Adapt.
Meanwhile, as we considered in \cref{Sec:Momentum}, acceleration schemes used by Adapt and Backt are totally different to each other, and they can be combined to form a further accelerated method Unifi.
One can observe in \cref{Fig:comparison} that Unifi shows the fastest convergence rate among all the methods for every model problem.
Since the difference between the computational costs of a single iteration of Plain and Unifi is insignificant, we can conclude that Unifi possesses the best computational efficiency among all the methods, absorbing the advantages of Adapt and Backt.

\section{Conclusion}
\label{Sec:Conclusion}
In this paper, we proposed a novel backtracking strategy for the additive Schwarz method for the general convex optimization.
It was proven rigorously that the additive Schwarz method with backtracking achieves faster convergence rate than the plain method.
Moreover, we showed that the proposed backtracking strategy can be combined with the momentum acceleration technique proposed in~\cite{Park:2021b}, and proposed a further accelerated additive Schwarz method, \cref{Alg:unified}.
Numerical results verifying our theoretical results and the superiority of the proposed methods were presented.

We observed in \cref{Sec:Numerical} that the additive Schwarz method with backtracking achieves faster convergence behavior than the plain method for any choice of the adjustment parameter $\rho$.
However, it remains as an open problem that what value of $\rho$ results the fastest convergence rate.
Optimizing $\rho$ for the sake of construction of a faster additive Schwarz method will be considered as a future work.

\bibliographystyle{siamplain}
\bibliography{refs_ASM_back}
\end{document}